\newtheorem{remark}[theorem]{Remark}
\newcommand{\be}{\begin{equation}}
\newcommand{\ee}{\end{equation}}
\newcommand{\bea}{\begin{eqnarray}}
\newcommand{\eea}{\end{eqnarray}}
\newcommand{\beas}{\begin{eqnarray*}}
\newcommand{\eeas}{\end{eqnarray*}}
\newcommand{\vertiii}[1]{{\left\vert\kern-0.25ex\left\vert\kern-0.25ex\left\vert #1 
    \right\vert\kern-0.25ex\right\vert\kern-0.25ex\right\vert}}
\begin{document}
\title{An efficient algorithm for simulating\\ ensembles of parameterized flow problems}

\author{
Max Gunzburger\thanks{Department of Scientific Computing,  
Florida State University,
Tallahassee, FL 32306-4120 {\tt mgunzburger@fsu.edu}. Research supported by the U.S. Air Force Office of Scientific Research grant FA9550-15-1-0001 and the U.S. Department of Energy Office of Science grants DE-SC0009324 and DE-SC0016591.} 
\and Nan Jiang\thanks{Department of Mathematics and Statistics,  
Missouri University of Science and Technology,
Rolla, MO 65409-0020 {\tt jiangn@mst.edu}. }
\and Zhu Wang\thanks{Department of Mathematics, 
University of South Carolina, Columbia, SC 29208 {\tt wangzhu@math.sc.edu}. Research  supported by the
U.S. National Science Foundation grant DMS-1522672 and the U.S. Department of Energy grant  DE-SC0016540.
}
}
%\date{\today}
\maketitle

\begin{abstract}
Many applications of computational fluid dynamics require multiple simulations of a flow under different input conditions. In this paper, a numerical algorithm is developed to efficiently determine a set of such simulations in which the individually independent members of the set are subject to different viscosity coefficients, initial conditions, and/or body forces. The proposed scheme applied to the flow ensemble leads to need to solve a single linear system with multiple right-hand sides, and thus is computationally more efficient than solving for all the simulations separately. We show that the scheme is nonlinearly and long-term stable under certain conditions on the time-step size and a parameter deviation ratio. Rigorous numerical error estimate shows the scheme is of first-order accuracy in time and optimally accurate in space. Several numerical experiments are presented to illustrate the theoretical results. 
\end{abstract}

\begin{keywords}
Navier-Stokes equations, ensemble simulations, ensemble method
\end{keywords}
\section{Introduction}

Numerical simulations of incompressible viscous flows have important applications in engineering and science. In this paper, we consider settings in which one wishes to obtain solutions for several different values of the physical parameters and several different choices for the forcing functions appearing in the partial differential equation (PDE) model. For example, in building low-dimensional surrogates for the PDE solution such as sparse-grid interpolants or proper orthogonal decomposition approximations, one has to first determine expensive approximation of solutions corresponding to several values of the parameters. Sensitivity analysis of solutions is setting in which one often has to determine approximate solutions for several parameter values and/or forcing functions. An important third example is quantifying the uncertainties of outputs from the model equations. Mathematical models should take into account the uncertainties invariably present in the specification of physical parameters and/or forcing functions appearing in the model equations. 
For flow problems, because the viscosity of the liquid or gas often depends on the temperature, an inaccurate measurement of the temperature would introduce some uncertainty into the viscosity of the flow. Direct measurements of the viscosity using flow meters and measurements of the state of the system are also prone to uncertainties. Of course, forcing functions, e.g., initial condition data, can and usually are also subject to uncertainty.
In such cases, due to the lack of of exact information, stochastic modeling is used to describe flows subject to a random viscosity coefficient and/or random forcing. Subsequently,  numerical methods are employed to quantify the uncertainties in system output.
% (e.g., in \cite{powell2012preconditioning,sousedik2016stochastic}).  
It is known that uncertainty quantification (UQ), when a random sampling method such as Monte Carlo method is used, could be computationally expensive for large-scale problems because each individual realization requires a large-scale computation but on the other hand, many realizations may be needed in order to obtain accurate statistical information about the outputs of interest. Therefore, for all the examples discussed and for many others, how to design efficient algorithms for performing multiple numerical simulations becomes a matter of great interest. 

The ensemble method which forms the basis for our approach was proposed in \cite{JL14}; there, a set of $J$ solutions of the Navier-Stokes equations (NSE) with distinct initial conditions and forcing terms is considered. 
All solutions are found, at each time step, by solving a linear system with one shared coefficient matrix and $J$ right-hand sides (RHS), reducing both the storage requirements and computational costs of the solution process. 
The algorithm of \cite{JL14} is first-order accurate in time; it is extended to higher-order accurate schemes in \cite{J15,J16}. Ensemble regularization methods are developed in \cite{J15, JL15, TNW16} for high Reynolds number flows, and a turbulence model based on ensemble averaging is developed in \cite{JKL15}. The ensemble algorithm has also been extended to simulate MHD flows in \cite{MR16}. Ensemble algorithms incorporating reduced-order modeling techniques are studied in \cite{GJS16a, GJS16b}. It is worth mentioning that all the ensemble algorithms developed so far can only deal with simulations subject to different initial conditions and/or body forces, but not other model parameters.

In this paper, we develop a numerical scheme for ensemble-based simulations of the NSE in which not only the initial data and body force function, but also the {\it viscosity coefficient}, may vary from one ensemble member to another.   
Specifically, we consider a set of $J$ NSE simulations on a bounded domain subject to no-slip boundary conditions for which, for $j=1,\ldots,J$, an individual member solves the system
\begin{equation}
\label{eq:NSE}
\left\{
\begin{array}{rcll}
u_{j,t}+u_{j}\cdot\nabla u_{j}-\nu_j\triangle u_{j}+\nabla p_{j}  &=& f_{j}(x,t) \quad &\text{ in }\Omega\times [0, \infty) \\
\nabla\cdot u_{j}  &=& 0 \quad &\text{ in }\Omega \times [0, \infty) \\
u_{j}  &=& 0 \quad &\text{ on }\partial\Omega\\
u_{j}(x,0)  &=& u_{j}^{0}(x) \quad & \text{ in }\Omega 
\end{array}\right.,
\end{equation}
which corresponds, for each $j$, to a different viscosity coefficient $\nu_j$ and/or distinct initial data $u_{j}^{0}$ and/or body forces $f_{j}$.

Due to the nonlinear convection term, implicit and semi-implicit schemes are invariably used for time integration.  For a semi-implicit scheme, the associated discrete linear systems would be different for each individually independent simulation, i.e., for each $j$.  As a result, at each time step, $J$ linear systems need to be solved to determine the ensemble, resulting in a huge computational effort. For a fully implicit scheme, the situation is even worse because one would have to solve many more linear systems due to the nonlinear solver iteration. To tackle this issue, we propose a novel discretization scheme that results, at each time step, in a common coefficient matrix for all the ensemble members. 
%which keeps the coefficient matrix independent of the ensemble member, hence, only one linear system needs to be solved at each time step

\subsection{The ensemble-based semi-implicit scheme}

For clarity, we temporarily suppress the spatial discretization and only consider the ensemble-based implicit-explicit temporal integration scheme 
\begin{equation}
\label{First-Order}
\left\{\begin{aligned}
\frac{u_{j}^{n+1}-u_{j}^{n}}{\Delta t}+\overline{u}^{n}\cdot\nabla u_{j}^{n+1}
+(u_{j}^{n}-\overline{u}^{n})\cdot\nabla u_{j}^{n} +\nabla p_{j}^{n+1}\qquad\\
-\overline{\nu}\Delta u_{j}^{n+1}-\left(\nu_j-\overline{\nu}\right)\Delta u_{j}^{n}&=f_{j}^{n+1}\\
\nabla\cdot u_{j}^{n+1}&=0,
\end{aligned}\right.
\end{equation}
where $\overline{u}^{n}$ and $\overline{\nu}$ are the ensemble means of the velocity and viscosity coefficient, respectively, defined as
\[
\overline{u}^{n}:=\frac{1}{J}\sum_{j=1}^{J}u_{j}^{n} \qquad \text{and}\qquad \overline{\nu}:=\frac{1}{J}\sum_{j=1}^{J}\nu_{j}.
\]
After rearranging the system, we have, at time $t_{n+1}$,  
\begin{equation}
\label{First-Order-2}
\left\{\begin{aligned}
\frac{1}{\Delta t}u_{j}^{n+1}+\overline{u}^{n}\cdot\nabla u_{j}^{n+1}
&-\overline{\nu}\Delta u_{j}^{n+1}+\nabla p_{j}^{n+1}\\
&= f_{j}^{n+1}+ \frac{1}{\Delta t}u_{j}^{n} -(u_{j}^{n}-\overline{u}^{n})\cdot\nabla u_{j}^{n}+\left(\nu_j-\overline{\nu}\right)\Delta u_{j}^{n} \\
\nabla\cdot u_{j}^{n+1}&=0.
\end{aligned}\right.
\end{equation}
It is clear that the coefficient matrix of the resulting linear system will be independent of $j$. 
Thus, for the flow ensemble, to advance all members of the ensemble one time step, we need only solve a single linear system with $J$ right-hand sides. Compared with solving $J$ individually independent simulations, this approach used with a block solver such as a block generalized CG method \cite{FOP95,O80} is much more efficient and significantly reduces the required storage. 
When the size of the ensemble becomes huge, it can be subdivided into $p$ sub-ensembles so as to balance memory, communication, and computational costs and then \eqref{First-Order} can be applied to each sub-ensemble.

The rest of this section is devoted to establishing notation and to providing other preliminary information. Then, in \S\ref{sec:stab}, we prove a conditional stability result for a fully discrete finite element discretization of \eqref{First-Order}. In \S\ref{sec:err}, we derive an error estimate for the fully-discrete approximation. Results of the preliminary numerical simulations that illustrate the theoretical results are given in \S\ref{sec:num} and \S\ref{sec:con} provides some concluding remarks.

\subsection{Notation and preliminaries}

Let $\Omega$ denote an open, regular domain in $\mathbb{R}^{d}$ for $d=2$ or 
$3$ having boundary denoted by $\partial\Omega$. The $L^{2}(\Omega)$ norm and inner product are denoted by $\|\cdot\|$ and
$(\cdot, \cdot)$, respectively. The $L^{p}(\Omega)$ norms and the Sobolev
$W^{k}_{p}(\Omega)$ norms are denoted by $\|\cdot\|_{L^{p}}$ and $\|\cdot\|_{W_{p}^{k}}$,
respectively. 
The Sobolev space $W_{2}^{k}(\Omega)$ is simply denoted by $H^{k}(\Omega)$ and its norm by $\|\cdot\|_{k}$. For functions $v(x,t)$ defined on $(0,T)$, we define, for
$1\leq m<\infty$,
\[
\| v \|_{\infty,k} \text{ }:=EssSup_{[0,T]}\| v(\cdot, t)\|_{k}\qquad \text{and}\qquad \|v\|_{m,k} \text{ }:= \Big(  \int_{0}^{T}\|v(\cdot, t)\|_{k}^{m}\, dt\Big)
^{1/m} \text{ .}%
\]
Given a time step $\Delta t$, associated discrete norms are defined as 
\[
\vertiii{v}_{\infty,k}=\max\limits_{0\leq n\leq N}\Vert v^{n}\Vert_{k} \qquad \text{and}\qquad
\vertiii{v}_{m,k}:= \Big(\sum_{n=0}^{N}||v^{n}||_{k}^{m}\Delta t\Big)^{1/m},
\]
where $v^n=v(t_n)$ and $t_n=n\Delta t$.
%
%For $v\in H^1(\Omega)$, the $H^{1/2}(\Omega)$ norm satisfies the interpolation
%inequality
%\[
%\Vert v\Vert_{1/2}\leq C\sqrt{\Vert v\Vert\Vert\nabla v\Vert}.
%\]
Denote by 
%$H^{-k}(\Omega)$ the dual space of bounded linear functions on $H^{k}_{0}(\Omega)$. A norm of the 
$H^{-1}(\Omega)$ the dual space of bounded linear functions on $H^{1}_{0}(\Omega)=\{v\in H^{1}\,:\, v=0 \,\mbox{on $\partial\Omega$}\}$; a norm on $H^{-1}(\Omega)$ is given by
\[
\|f\|_{-1}=\sup_{0\neq v\in H^{1}_{0}(\Omega)}\frac{(f,v)}{\Vert\nabla v\Vert}
\text{ .}
\]

The velocity space
$X$ and pressure space $Q$ are given by 
\[
X:=[H_{0}^{1}(\Omega)]^{d}  \qquad \text{and}\qquad Q:=L_{0}^{2}(\Omega)=\{q\in L^2(\Omega)\,\,:\,\, \int_\Omega q\, d\Omega =0\},
\]
respectively. The space of weakly divergence free functions is
\[
V\text{ }:=\{v\in X\,\,:\,\, (\nabla\cdot v,q)=0\,\,\forall q\in Q\} .
\]
%and the norm on $V^{\ast}$ (the dual of $V$) is defined as
%\[
%\|f\|_{\ast}=\sup_{0\neq v\in V}\frac{(f,v)}{\Vert\nabla v\Vert} .
%\]

A weak formulation of (\ref{eq:NSE}) reads: for $j=1,\ldots,J$, find $u_j: \,[0,T]\rightarrow X$ and 
$p_j:\, [0,T]\rightarrow Q$ for a.e. $t\in(0,T]$ satisfying
\begin{equation*}
\left\{\begin{aligned}
(u_{j,t},v)+(u_{j}\cdot\nabla u_{j},v)+\nu_j(\nabla u_{j},\nabla v)-(p_{j}%
,\nabla\cdot v)  &= (f_{j},v) &\forall v\in X \\
(\nabla\cdot u_{j},q)  &= 0   &\forall q\in Q
\end{aligned}\right.
\end{equation*}
with $u_{j}(x,0)=u_{j}^{0}(x)$.

Our analysis is based on a finite element method (FEM) for spatial discretization. However, the results also extend, without much difficulty, to other variational discretization methods. 
Let $X_{h}\subset X$ and $Q_{h}\subset Q$ denote families of conforming velocity and pressure finite element spaces on regular subdivision of $\Omega$ into simplicies; the family is parameterized by the maximum diameter $h$ of any of the simplicies. 
Assume that the pair of spaces $(X_h,Q_h)$ satisfy the discrete inf-sup (or $LBB_h$) condition required for the stability of the finite element approximation and that the finite element spaces satisfy the approximation properties  
\begin{align}
\inf_{v_h\in X_h}\| v- v_h \|&\leq C h^{k+1}\Vert u \Vert_{k+1}	   &\forall v\in [H^{k+1}(\Omega)]^d \label{Interp1}\\
\inf_{v_h\in X_h}\| \nabla ( v- v_h )\|&\leq C h^k \Vert v\Vert_{k+1}&\forall v\in [H^{k+1}(\Omega)]^d \label{interp2}\\
\inf_{q_h\in Q_h}\| q- q_h \|&\leq C h^{s+1}\Vert p\Vert_{s+1}	   &\forall q\in H^{s+1}(\Omega),      \label{interp3}
\end{align}
where the generic constant $C>0$ is independent of mesh size $h$.  
An example for which the $LBB_h$ stability condition and the approximation properties are satisfied is the family of Taylor-Hood $P^{s+1}$--$P^{s}$, $s\geq 2$, element pairs.
For details concerning finite element methods see \cite{Cia02} and see
\cite{GR79,GR86,Max89,Layton08} for finite element methods for the Navier-Stokes equations.

The discretely divergence free subspace of $X_{h}$ is defined as
\[
V_{h}\text{ }:=\{v_{h}\in X_{h}\,\,:\,\,(\nabla\cdot v_{h},q_{h})=0\quad\forall
q_{h}\in Q_{h}\}.
\]
Note that, in general, $V_h\not\subset V$.
We assume the mesh and finite element spaces satisfy the standard inverse inequality 
\begin{equation}\label{inverse}
h\Vert\nabla v_{h}\Vert   \leq C_{(inv)}\Vert v_{h}\Vert. 
\qquad\forall v_{h}\in X_{h}
\end{equation}
that is known to hold for standard finite element spaces with locally quasi-uniform meshes \cite{BS08}. 
We also define the standard explicitly skew-symmetric trilinear form
\[
b^{\ast}(u,v,w):=\frac{1}{2}(u\cdot\nabla v,w)-\frac{1}{2}(u\cdot\nabla w,v) 
\]
that satisfies the bounds \cite{Layton08}
\begin{gather}
b^{\ast}(u,v,w)\leq C \left(\Vert \nabla u\Vert\Vert u\Vert\right)^{1/2}\Vert\nabla v\Vert\Vert\nabla
w \Vert \quad \forall\, u, v, w \in X \label{In1}\\
b^{\ast}(u,v,w)\leq C \Vert \nabla u\Vert\Vert\nabla v\Vert\left(\Vert\nabla
w \Vert\Vert w\Vert\right)^{1/2} \quad \forall\, u, v, w \in X .\label{In2}
\end{gather}
%Let $t^{n}=n\Delta t$, $n=0,1, \ldots ,N$ and $T:=N\Delta t$. 
We also denote the exact and approximate solutions at $t=t^n$ as $u_{j}^{n}$ and $u_{j, h}^{n}$, respectively. 

\section{Stability analysis}\label{sec:stab}

The fully-discrete finite element discretization of (\ref{First-Order}) is given as follows. Given $u_{j,h}^{0}\in X_{h}$, for $n=0,1,\ldots,N-1$, find
 $u_{j,h}^{n+1}\in X_{h}$ and $p_{j,h}^{n+1}\in Q_{h}$ satisfying
\begin{equation}
\label{First-Order-h}
\hspace{-3mm}\left\{\begin{aligned}
&\Big(\frac{u_{j,h}^{n+1}-u_{j,h}^{n}}{\Delta t},v_{h}\Big)+b^{\ast}(\overline{u}_{h}^{n},u_{j,h}^{n+1},v_{h})+b^{\ast}(u_{j,h}^{n}-\overline{u}_{h}^{n},u_{j,h}^{n}, v_{h})-(p_{j,h}^{n+1},\nabla\cdot v_{h})\\ 
&\qquad\qquad+\overline{\nu}(\nabla u_{j,h}^{n+1},\nabla
v_{h})+\left(\nu_j-\overline{\nu}\right)(\nabla u_{j,h}^{n},\nabla
v_{h})=(f_{j}^{n+1},v_{h})\quad \forall v_{h}\in X_{h}\\
&\big(\nabla\cdot u_{j,h}^{n+1},q_{h}\big)=0 \quad\forall q_{h}\in Q_{h}.
\end{aligned}\right.
\end{equation}
We begin by proving the conditional, nonlinear, long-time stability of the scheme
(\ref{First-Order-h}) under a time-step condition and a parameter deviation condition. 

\begin{theorem}[Stability] 
\label{th:First-Order}
For all $j= 1, \ldots, J$, if for some $\mu$, $0\leq\mu<1$, and some $\epsilon$, $0< \epsilon\leq 2-2\sqrt{\mu}$, the following time-step condition and parameter deviation condition both hold
\begin{align}
C\frac{\Delta t}{\overline{\nu} h}\left\Vert\nabla(u_{j,h}^{n}-\overline{u}_{h}^{n})\right\Vert^{2}
&\leq \frac{(2-2\sqrt{\mu}-\epsilon)\sqrt{\mu}}{2(\sqrt{\mu}+\epsilon)} ,
 \label{ineq:CFL-h1}
 \\
 \frac{|\nu_j -\overline{\nu}| }{\overline{\nu}} &\leq \sqrt{\mu},
 \label{ineq:CFL-h2}
\end{align}
then,  the scheme \eqref{First-Order-h} is nonlinearly, long time stable. 
In particular, for $j= 1, \ldots, J$ and for any $N\geq1$, we have 
%$0\leq  \beta < \frac{1}{2}$ and
%
\begin{align*}
&\frac{1}{2}\|u_{j,h}^{N}\|^{2}+\frac{1}{4}\sum_{n=0}^{N-1}\|u_{j,h}%
^{n+1}-u_{j,h}^{n}\|^{2}
+\overline{\nu}\Delta t \left(\frac{\sqrt{\mu}}{2}\frac{2+\epsilon}{\sqrt{\mu}+\epsilon}-\frac{\vert \nu_j-\overline{\nu}\vert}{2 \overline{\nu}}\right)
\|\nabla u_{j,h}^{N}\|^{2}\\
&\qquad\leq\sum_{n=0}^{N-1}\frac{\Delta t}{\overline{\nu}}\|f_{j}^{n+1}\|_{-1}^{2}+ \frac{1}%
{2}\|u_{j,h}^{0}\|^{2}
+\overline{\nu}\Delta t \left(\frac{\sqrt{\mu}}{2}\frac{2+\epsilon}{\sqrt{\mu}+\epsilon}-\frac{\vert \nu_j-\overline{\nu}\vert}{2 \overline{\nu}}\right)\|\nabla u_{j,h}^{0}\|^{2}.
\end{align*}
%\begin{align}
%&\frac{1}{2}\|u_{j,h}^{N}\|^{2}+\frac{1}{4}\sum_{n=0}^{N-1}\|u_{j,h}%
%^{n+1}-u_{j,h}^{n}\|^{2}+\overline{\nu}\Delta
%t (\frac{(3-\sqrt{\mu})\sqrt{\mu}}{2}-\frac{\vert \nu_j-\overline{\nu}\vert}{2 \overline{\nu}})\|\nabla u_{j,h}^{N}\|^{2}\label{Stability}\\
%&\leq\sum_{n=0}^{N-1}\frac{\Delta t}{\nu}\|f_{j}^{n+1}\|_{*}^{2}+ \frac{1}%
%{2}\|u_{j,h}^{0}\|^{2}+\overline{\nu}\Delta
%t (\frac{(3-\sqrt{\mu})\sqrt{\mu}}{2}-\frac{\vert \nu_j-\overline{\nu}\vert}{2 \overline{\nu}})\|\nabla u_{j,h}^{0}\|^{2}, \text{  j= 1,...,J
%.}\nonumber
%\end{align}
\end{theorem}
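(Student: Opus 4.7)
The plan is to test the scheme \eqref{First-Order-h} against $v_h = u_{j,h}^{n+1}\in X_h$. Since $u_{j,h}^{n+1}$ is discretely divergence free, the pressure term drops out, and applying the polarization identity to the discrete time derivative yields
\begin{equation*}
\tfrac{1}{2}\bigl(\|u_{j,h}^{n+1}\|^{2}-\|u_{j,h}^{n}\|^{2}+\|u_{j,h}^{n+1}-u_{j,h}^{n}\|^{2}\bigr)+\Delta t\,[\text{trilinear terms}]+\Delta t\,[\text{viscous terms}]=\Delta t\,(f_{j}^{n+1},u_{j,h}^{n+1}).
\end{equation*}
The first trilinear term $b^{\ast}(\overline{u}_{h}^{n},u_{j,h}^{n+1},u_{j,h}^{n+1})$ vanishes by skew-symmetry. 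For the second, I would write
$b^{\ast}(u_{j,h}^{n}-\overline{u}_{h}^{n},u_{j,h}^{n},u_{j,h}^{n+1})=b^{\ast}(u_{j,h}^{n}-\overline{u}_{h}^{n},u_{j,h}^{n},u_{j,h}^{n+1}-u_{j,h}^{n})$, again using skew-symmetry to drop the $u_{j,h}^{n}$-tested piece.

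Next I would bound this leftover trilinear contribution by invoking \eqref{In2} followed by the inverse inequality \eqref{inverse} on $u_{j,h}^{n+1}-u_{j,h}^{n}$, producing a factor $h^{-1/2}$ and an estimate of the form $C\,h^{-1/2}\|\nabla(u_{j,h}^{n}-\overline{u}_{h}^{n})\|\,\|\nabla u_{j,h}^{n}\|\,\|u_{j,h}^{n+1}-u_{j,h}^{n}\|$. Young's inequality then splits this into a fraction of $\|u_{j,h}^{n+1}-u_{j,h}^{n}\|^{2}$ (absorbed into the polarization kinetic-energy jump) plus a term proportional to $\tfrac{\Delta t^{2}}{h}\|\nabla(u_{j,h}^{n}-\overline{u}_{h}^{n})\|^{2}\|\nabla u_{j,h}^{n}\|^{2}$, which is exactly the shape anticipated by the CFL bound \eqref{ineq:CFL-h1}. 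The viscosity-deviation term $(\nu_j-\overline{\nu})(\nabla u_{j,h}^{n},\nabla u_{j,h}^{n+1})$ is handled by Cauchy--Schwarz and a (possibly weighted) Young's inequality, distributing $\tfrac{|\nu_j-\overline{\nu}|\Delta t}{2}$ onto both $\|\nabla u_{j,h}^{n}\|^{2}$ and $\|\nabla u_{j,h}^{n+1}\|^{2}$. The forcing term is controlled using the dual norm: $\Delta t\,(f_{j}^{n+1},u_{j,h}^{n+1})\le \tfrac{\Delta t}{\overline{\nu}}\|f_{j}^{n+1}\|_{-1}^{2}+\tfrac{\overline{\nu}\Delta t}{4}\|\nabla u_{j,h}^{n+1}\|^{2}$.

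Collecting everything, I would obtain an inequality of the form
\begin{equation*}
\tfrac{1}{2}\|u_{j,h}^{n+1}\|^{2}-\tfrac{1}{2}\|u_{j,h}^{n}\|^{2}+\tfrac{1}{4}\|u_{j,h}^{n+1}-u_{j,h}^{n}\|^{2}+A\,\|\nabla u_{j,h}^{n+1}\|^{2}\le B\,\|\nabla u_{j,h}^{n}\|^{2}+\tfrac{\Delta t}{\overline{\nu}}\|f_{j}^{n+1}\|_{-1}^{2},
\end{equation*}
where the coefficients $A,B$ depend on $\overline{\nu}\Delta t$, $|\nu_j-\overline{\nu}|\Delta t$, and on the constants from the nonlinear Young-splitting. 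The heart of the argument is to show $A\ge B$ so that telescoping from $n=0$ to $N-1$ gives the stated estimate, and specifically that the leftover coefficient on $\|\nabla u_{j,h}^{N}\|^{2}$ equals $\overline{\nu}\Delta t\bigl(\tfrac{\sqrt{\mu}}{2}\tfrac{2+\epsilon}{\sqrt{\mu}+\epsilon}-\tfrac{|\nu_j-\overline{\nu}|}{2\overline{\nu}}\bigr)$.

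The main obstacle is choosing the Young's-inequality weights in the trilinear and viscous splittings so that the book-keeping produces precisely the advertised factor $\tfrac{\sqrt{\mu}}{2}\tfrac{2+\epsilon}{\sqrt{\mu}+\epsilon}$. The parameter $\sqrt{\mu}$ enters via \eqref{ineq:CFL-h2} (it controls the relative size $|\nu_j-\overline{\nu}|/\overline{\nu}$), while $\epsilon$ is the slack one reserves when applying Young's inequality in the trilinear estimate, so the CFL budget \eqref{ineq:CFL-h1} must equal the residual after extracting both the $\sqrt{\mu}$-scaled viscosity-deviation loss and the $\epsilon$-scaled trilinear loss from the coercive term $\overline{\nu}\Delta t\|\nabla u_{j,h}^{n+1}\|^{2}$. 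Once the weights are chosen consistently, the stated energy identity follows by summing in $n$ and using $0\le\mu<1$ with $0<\epsilon\le 2-2\sqrt{\mu}$ to guarantee the remaining coefficient of $\|\nabla u_{j,h}^{N}\|^{2}$ is nonnegative, giving long-time stability.
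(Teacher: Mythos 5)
Your outline follows the paper's proof step for step: test with $v_h=u_{j,h}^{n+1}$, kill the first trilinear term by skew-symmetry, rewrite the second as $b^{\ast}(u_{j,h}^{n}-\overline{u}_{h}^{n},u_{j,h}^{n},u_{j,h}^{n+1}-u_{j,h}^{n})$, bound it via \eqref{In2} plus the inverse inequality \eqref{inverse} and absorb $\frac14\|u_{j,h}^{n+1}-u_{j,h}^{n}\|^2$, treat the viscosity-deviation term by a symmetric weighted Young split (the paper's optimal choice $\beta=2|\nu_j-\overline{\nu}|/\overline{\nu}$ gives exactly your $\frac{|\nu_j-\overline{\nu}|\Delta t}{2}$ on each gradient), and close by telescoping once the coefficient on $\|\nabla u_{j,h}^{n+1}\|^2$ dominates that on $\|\nabla u_{j,h}^{n}\|^2$. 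So the approach is the same.

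However, what you defer as "the main obstacle" is precisely where the hypotheses \eqref{ineq:CFL-h1}--\eqref{ineq:CFL-h2} are actually used, and the one concrete weight you do commit to is inconsistent with the stated constant. If you bound the forcing by $\frac{\Delta t}{\overline{\nu}}\|f_j^{n+1}\|_{-1}^2+\frac{\overline{\nu}\Delta t}{4}\|\nabla u_{j,h}^{n+1}\|^2$, the residual coercivity is $\overline{\nu}\Delta t\bigl(\tfrac34-\tfrac{|\nu_j-\overline{\nu}|}{2\overline{\nu}}\bigr)$, which cannot equal the advertised $\overline{\nu}\Delta t\bigl(\tfrac{\sqrt{\mu}}{2}\tfrac{2+\epsilon}{\sqrt{\mu}+\epsilon}-\tfrac{|\nu_j-\overline{\nu}|}{2\overline{\nu}}\bigr)$ in general (for $\mu$ near $1$ and $\epsilon$ small the latter prefactor approaches $1>\tfrac34$), and it leaves no parameter through which $\mu$ and $\epsilon$ can enter. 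The paper instead keeps the forcing weight free, $\frac{\alpha\overline{\nu}\Delta t}{4}\|\nabla u_{j,h}^{n+1}\|^2$, splits the leftover coefficient $1-\frac{\alpha}{4}-\frac{|\nu_j-\overline{\nu}|}{2\overline{\nu}}$ of $\|\nabla u_{j,h}^{n}\|^2$ into a $\sigma$ and a $(1-\sigma)$ portion (the first to dominate the extra $\frac{|\nu_j-\overline{\nu}|}{2\overline{\nu}}$, the second to dominate the CFL term $\frac{C\Delta t}{\overline{\nu}h}\|\nabla(u_{j,h}^{n}-\overline{u}_{h}^{n})\|^2$), and then chooses $\sigma=\frac{\sqrt{\mu}+\epsilon}{2-\sqrt{\mu}}$ and $\alpha=4-\frac{2(\sigma+1)}{\sigma}\sqrt{\mu}$, so that $1-\frac{\alpha}{4}=\frac{\sqrt{\mu}}{2}\frac{2+\epsilon}{\sqrt{\mu}+\epsilon}$ and the two nonnegativity requirements reduce exactly to \eqref{ineq:CFL-h2} and \eqref{ineq:CFL-h1}. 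Also note that $\epsilon$ enters through this $(\sigma,\alpha)$ choice, not as slack in the trilinear Young step (the trilinear loss is absorbed entirely by $\frac14\|u_{j,h}^{n+1}-u_{j,h}^{n}\|^2$ and the CFL term); your proof becomes complete once you replace the fixed forcing weight by this tunable one and carry out that verification.
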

\begin{proof}
The proof is given in Appendix \ref{proofa1}.
\end{proof}

\begin{remark}
It is seen from \eqref{ineq:CFL-h1} that the upper bound in the time-step condition increases as $\epsilon$ decreases. As $\epsilon\rightarrow 0$, the bound approaches  $1-\sqrt{\mu}$. 
Because the upper bound for the relative deviation of viscosity coefficient in \eqref{ineq:CFL-h2} is bounded by $\sqrt{\mu}$, the two stability conditions are oppositional to each other. 
\end{remark}
\begin{remark}
Noting that the condition \eqref{ineq:CFL-h1} only depends on known quantities such as the solution at $t_n$ and that the scheme \eqref{First-Order-h} is a one-step method, 
\eqref{ineq:CFL-h1} can be used to adapt $\triangle t$ in order to guarantee the stability for the ensemble simulations. 
\end{remark}

\section{Error Analysis\label{sec:err}}

In this section, we give a detailed error analysis of the proposed method under the same type of time-step condition (with possibly different constant $C$ on the left hand side of the inequality) and the same parameter deviation condition. Assuming that
$X_{h}$ and $Q_{h}$ satisfy the $LBB^{h}$ condition, the scheme \eqref{First-Order-h}
is equivalent to: Given $u_{j,h}^{0}\in V_{h}$, for $n=0,1,\ldots,N-1$, find $u_{j,h}%
^{n+1}\in V_{h}$ such that 
\begin{equation}
\label{eq: conv}
\begin{aligned}
&\Big(\frac{u_{j,h}^{n+1}-u_{j,h}^{n}}{\Delta t},v_{h}\Big)+b^{\ast}(\overline{u}_{h}^{n},u_{j,h}^{n+1},v_{h})+b^{\ast}(u_{j,h}^{n}-\overline{u}_{h}^{n},u_{j,h}
^{n},v_{h})\\
&\quad+\overline{\nu}(\nabla u_{j,h}^{n+1},\nabla v_{h})+\left(\nu_j-\overline{\nu}\right)(\nabla u_{j,h}^{n+1},\nabla v_{h})=(f_{j}^{n+1},v_{h})\quad\forall
v_{h}\in V_{h}.
\end{aligned}
\end{equation}
To analyze the rate of convergence of the approximation, we assume
that the following regularity for the exact solutions:
\begin{gather*}
u_{j} \in L^{\infty}(0,T;H^{k+1}(\Omega))\cap H^{1}(0,T;H^{k+1}(\Omega))\cap
H^{2}(0,T;L^{2}(\Omega)),\\
p_{j} \in L^{2}(0,T;H^{s+1}(\Omega))\quad \text{and}\quad f_{j} \in L^{2}%
(0,T;L^{2}(\Omega)).
\end{gather*}
Let $e_{j}^{n}=u_{j}^{n}-u_{j,h}^{n}$ denote the approximation error of the $j$-th simulation at the time instance $t_n$. We then have the following error estimates.

\begin{theorem}[Convergence of scheme \eqref{First-Order-h}]\label{th:errBEFE-Ensemble} 
For all $j= 1, \ldots, J$, if for some $\mu$, $0\leq\mu<1$, and some $\epsilon$, $0< \epsilon\leq 2-2\sqrt{\mu}$, the following time-step condition and parameter deviation condition both hold
\begin{align}
C\frac{\Delta t}{\overline{\nu} h}\left\Vert\nabla(u_{j,h}^{n}-\overline{u}_{h}^{n})\right\Vert^{2}
&\leq \frac{(2-2\sqrt{\mu}-\epsilon)\sqrt{\mu}}{2(\sqrt{\mu}+\epsilon)} ,\label{conv1}
 \\
 \frac{|\nu_j -\overline{\nu}| }{\overline{\nu}} &\leq \sqrt{\mu},\label{conv2}
\end{align}
then,  there exists a positive constant $C$ independent of the time step such that 
\begin{equation*}%\label{ineq:err00}
\begin{aligned}
\frac{1}{2}&\Vert e_{j}^{N}\Vert^{2}
+ \left(\frac{\sqrt{\mu}}{2}\frac{(2+\epsilon)}{\sqrt{\mu}+\epsilon}-\frac{\vert \nu_j-\overline{\nu}\vert}{2 \overline{\nu}}\right)\overline{\nu}\Delta t \Vert\nabla e_{j}^{N}\Vert^{2}\\
&\qquad+\frac{1}{15} \frac{\epsilon} {\sqrt{\mu}+\epsilon}( 1-\frac{\sqrt{\mu}}{2})\overline{\nu} \Delta t\sum_{n=0}^{N-1}\Vert\nabla e_{j}^{n+1}\Vert^{2}
\\
&\leq e^{\frac{CT}{\overline{\nu}^{3}}}
\Big\{\frac{1}{2}\Vert e_{j}^{0}\Vert^{2}
+\left(\frac{\sqrt{\mu}}{2}\frac{(2+\epsilon)}{\sqrt{\mu}+\epsilon}-\frac{\vert \nu_j-\overline{\nu}\vert}{2 \overline{\nu}}\right)\overline{\nu}\Delta t \Vert\nabla e_{j}^{0}\Vert^{2}
\\
&\quad+C\Delta t^2\frac{\vert \nu_j -\overline{\nu}\vert^2}{\overline{\nu}}\vertiii{\nabla u_{j,t}}^2_{2,0}+C\overline{\nu}h^{2k}\vertiii{ u_j }^2_{2,k+1}
+C\frac{\vert \nu_j - \overline{\nu}\vert^2}{\overline{\nu}}h^{2k}\vertiii{ u_j}^2_{2,k+1}
\\
&\quad+Ch^{2k+1}\Delta t^{-1}\vertiii{ u_j}^2_{2,k+1}+C h \Delta t \vertiii{ \nabla u_{j,t}}^2_{2,0} + C\overline{\nu}^{-1}h^{2k}\vertiii{ u_j}^2_{2,k+1} 
\\
&\quad+C\overline{\nu}^{-1}\Delta t^2\vertiii{ \nabla u_{j,t}}^2_{2,0}+ C\overline{\nu}^{-1}h^{2k}\vertiii{ u_j}^4_{4, k+1}+C\nu^{-1}h^{2k}
 \\
&\quad+C\overline{\nu}^{-1} h^{2s+2}\Vert|p_{j}|\Vert_{2,s+1}^{2}+C\overline{\nu}^{-1} h^{2k+2}\Vert|u_{j,t}|\Vert_{2,k+1}^{2}
+C\overline{\nu}^{-1}\Delta t^{2}\Vert|u_{j,tt}|\Vert_{2,0}^{2}\Big\}
\\
&\quad+\frac{1}{2}h^{2k+2}\vertiii{ u_j }_{\infty, k+1}^2
+\left(\frac{\sqrt{\mu}}{2}\frac{(2+\epsilon)}{\sqrt{\mu}+\epsilon}-\frac{\vert \nu_j-\overline{\nu}\vert}{2 \overline{\nu}}\right)\overline{\nu}\Delta t \vertiii{ u_j}_{\infty, k+1}^2\\
&\quad+\frac{1}{15} \frac{\epsilon} {\sqrt{\mu}+\epsilon}( 1-\frac{\sqrt{\mu}}{2})\overline{\nu} h^{2k}\vertiii{ u_j}^2_{2,k+1}.
\end{aligned}
\end{equation*}
\end{theorem}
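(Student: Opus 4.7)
The plan is to adapt the stability argument from Theorem~\ref{th:First-Order} to the error equation, splitting the error into an interpolation part and a finite element projection part, and using the two structural conditions \eqref{conv1}--\eqref{conv2} to absorb the troublesome explicit nonlinear and explicit viscous-deviation terms into the viscous dissipation, before concluding with discrete Gr\"onwall.

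First I would split $e_j^n = (u_j^n - \widetilde u_{j,h}^n) + (\widetilde u_{j,h}^n - u_{j,h}^n) =: \eta_j^n + \phi_{j,h}^n$, where $\widetilde u_{j,h}^n \in V_h$ is a suitable discretely divergence-free projection of $u_j^n$ so that \eqref{Interp1}--\eqref{interp2} apply to $\eta_j^n$. Evaluating the weak formulation at $t_{n+1}$ with a test function $v_h \in V_h$, and subtracting \eqref{eq: conv}, yields an error equation for $\phi_{j,h}^{n+1}$ whose right-hand side collects the time-discretization consistency error $(u_{j,t}^{n+1} - (u_j^{n+1}-u_j^n)/\Delta t,v_h)$, the explicit viscous deviation error coming from treating $(\nu_j-\overline{\nu})\Delta u$ explicitly, a pressure term $(p_j^{n+1}-q_h,\nabla\cdot v_h)$ bounded by \eqref{interp3}, and three trilinear contributions built from $b^*$. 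The natural choice is $v_h = \phi_{j,h}^{n+1}$: the pressure term then survives only as the approximation residual, $b^*(\overline{u}_h^n,\phi_{j,h}^{n+1},\phi_{j,h}^{n+1})=0$ by skew-symmetry, and the polarization identity converts the time-difference term into telescoping kinetic energy plus $\|\phi_{j,h}^{n+1}-\phi_{j,h}^n\|^2$, precisely matching the left-hand side structure of the claimed bound.

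The main obstacle is to control the nonlinear consistency terms arising from the decomposition
\begin{equation*}
b^*(u_j^n,u_j^{n+1},v_h) - b^*(\overline{u}_h^n,u_{j,h}^{n+1},v_h) - b^*(u_{j,h}^n - \overline{u}_h^n,u_{j,h}^n,v_h)
\end{equation*}
after inserting $\pm \overline{u}_h^n$ and $\pm u_{j,h}^n$ in the first two arguments and writing everything in terms of $\eta_j$, $\phi_{j,h}$, and $u_j^{n+1}-u_j^n$. This produces a term of the form $b^*(\phi_{j,h}^n - \overline{\phi}_h^n,\phi_{j,h}^{n+1},\phi_{j,h}^{n+1})$-type plus fluctuation pieces $b^*(u_{j,h}^n - \overline{u}_h^n,\eta_j^n \text{ or } u_j^{n+1}-u_j^n,\phi_{j,h}^{n+1})$. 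For the latter I would use the bound \eqref{In1} or \eqref{In2}, pull out $\|\nabla(u_{j,h}^n-\overline{u}_h^n)\|$, apply the inverse inequality \eqref{inverse} to trade one gradient for $h^{-1}$, and then apply Young's inequality: the resulting constant in front of $\overline{\nu}\|\nabla\phi_{j,h}^{n+1}\|^2$ is exactly what hypothesis \eqref{conv1} makes absorbable, in the same way as in the stability proof. Analogously, rewriting the explicit viscous-deviation term as $(\nu_j-\overline{\nu})(\nabla(u_{j,h}^{n+1}-u_{j,h}^n),\nabla\phi_{j,h}^{n+1})$ and adding/subtracting $\nabla u_j^{n+1}$ and $\nabla u_j^n$ produces a genuine $O(\Delta t)$ consistency piece $\Delta t\,|\nu_j-\overline{\nu}|\,\|\nabla u_{j,t}\|\,\|\nabla\phi_{j,h}^{n+1}\|$, which after Young's inequality becomes the $\Delta t^2|\nu_j-\overline{\nu}|^2/\overline{\nu}\,\vertiii{\nabla u_{j,t}}^2_{2,0}$ on the right-hand side; the telescoping $\|\nabla\phi_{j,h}^{n+1}\|^2-\|\nabla\phi_{j,h}^n\|^2$ piece accounts for the $\overline{\nu}\Delta t\|\nabla\phi_{j,h}^N\|^2$ with the precise coefficient $\frac{\sqrt{\mu}}{2}\frac{2+\epsilon}{\sqrt{\mu}+\epsilon}-\frac{|\nu_j-\overline{\nu}|}{2\overline{\nu}}$ stated in the theorem.

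Finally, I would bound every remaining consistency term (time truncation $\|u_{j,tt}\|$, interpolation $\eta_j^n$ and its time differences, pressure projection, and the term $b^*(u_j^n,u_j^{n+1}-u_j^n,\phi_{j,h}^{n+1})\lesssim \Delta t\|\nabla u_{j,t}\|\|\nabla\phi_{j,h}^{n+1}\|$) using Cauchy--Schwarz and Young's inequality, tuning each Young constant so that the surviving $\overline{\nu}\|\nabla\phi_{j,h}^{n+1}\|^2$ fits inside the slack $\tfrac{1}{15}\tfrac{\epsilon}{\sqrt{\mu}+\epsilon}(1-\tfrac{\sqrt{\mu}}{2})$ available after the stability-type absorption. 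Summing from $n=0$ to $N-1$ telescopes the quadratic $\phi$-terms, leaves a residual $\sum \|\phi_{j,h}^{n+1}\|^2$ coming from the trilinear estimates that is handled by the discrete Gr\"onwall lemma (producing the factor $e^{CT/\overline{\nu}^3}$), and invoking the triangle inequality $\|e_j^n\|\le\|\phi_{j,h}^n\|+\|\eta_j^n\|$ with \eqref{Interp1}--\eqref{interp2} converts the bound on $\phi_{j,h}$ into the stated bound on $e_j$, contributing the last three lines involving $\vertiii{u_j}_{\infty,k+1}$ and $\vertiii{u_j}_{2,k+1}$.
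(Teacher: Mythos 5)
Your proposal follows essentially the same route as the paper's proof: the same splitting $e_j^n=\eta_j^n+\xi_{j,h}^n$ with a discretely divergence-free interpolant, testing the error equation with the discrete error, absorbing the explicit nonlinear fluctuation via \eqref{In1}--\eqref{In2}, the inverse inequality and condition \eqref{conv1}, absorbing the explicit viscous-deviation term via \eqref{conv2} with the same telescoping gradient structure, and concluding with the discrete Gr\"onwall lemma and the triangle inequality. The only detail worth noting is that the paper also invokes the a priori stability bounds ($\Vert u_{j,h}^n\Vert\le C$ and $\Delta t\sum_n\Vert\nabla u_{j,h}^{n+1}\Vert^2\le C$ from Theorem \ref{th:First-Order}) to handle $b^{\ast}(u_{j,h}^n,\eta_j^{n+1},\xi_{j,h}^{n+1})$, which is where the $\overline{\nu}^{-1}h^{2k}\vertiii{u_j}^4_{4,k+1}$ and $\nu^{-1}h^{2k}$ terms originate; your plan subsumes this under the generic Young-inequality step but would need that observation to produce those particular terms.
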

\begin{proof}
The proof is given in Appendix \ref{proofa2}.
\end{proof}

In particular, when Taylor-Hood elements ($k=2$, $s=1$) are used, i.e., the $C^{0}$ piecewise-quadratic velocity space $X_{h}$ and the $C^{0}$ piecewise-linear pressure space
$Q_{h}$, we have the following estimate.
\begin{corollary}
Assume that $\Vert e_{j}^{0}\Vert$ and $\Vert\nabla e_j^0\Vert$ are both $O(h)$ accurate or better. Then, if $(X_{h},Q_{h})$ is chosen as the $(P_2, P_1)$ Taylor-Hood element pair, we have
\begin{equation*}
\frac{1}{2}\Vert e_{j}^{N}\Vert^{2}
+\frac{1}{15} \frac{\epsilon} {\sqrt{\mu}+\epsilon}\Big( 1-\frac{\sqrt{\mu}}{2}\Big)\overline{\nu} \Delta t\sum_{n=0}^{N-1}\Vert\nabla
e_{j}^{n+1}\Vert^{2} 
\leq C (h^2 + \Delta t^2 +h \Delta t)\text{ .}
\end{equation*}
\end{corollary}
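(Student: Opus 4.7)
The plan is to obtain the corollary as a direct specialization of Theorem \ref{th:errBEFE-Ensemble} to $k=2$ and $s=1$ followed by bookkeeping of powers of $h$ and $\Delta t$. First, I would observe that the coefficient
$\frac{\sqrt{\mu}}{2}\frac{2+\epsilon}{\sqrt{\mu}+\epsilon}-\frac{|\nu_j-\overline{\nu}|}{2\overline{\nu}}$
is strictly positive by the parameter deviation condition \eqref{conv2}, so the term containing $\|\nabla e_j^N\|^2$ on the left-hand side of Theorem \ref{th:errBEFE-Ensemble} is nonnegative and may be discarded; what remains on the left is exactly the quantity bounded in the corollary. Under the regularity hypotheses stated just above Theorem \ref{th:errBEFE-Ensemble}, every triple-bar norm $\vertiii{u_j}_{2,k+1}$, $\vertiii{u_j}_{\infty,k+1}$, $\vertiii{u_j}_{4,k+1}$, $\vertiii{\nabla u_{j,t}}_{2,0}$, $\vertiii{u_{j,tt}}_{2,0}$, $\vertiii{p_j}_{2,s+1}$, etc., on the right-hand side is finite and independent of $h$ and $\Delta t$; together with the bounds $|\nu_j-\overline{\nu}|/\overline{\nu}\le\sqrt{\mu}<1$, fixed $\overline{\nu}$, and the Gr\"onwall-type factor $\exp(CT/\overline{\nu}^3)$, all of these may be absorbed into a single generic constant $C$ depending only on the data and final time $T$.

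Next, I would go through the right-hand side term by term with $k=2$, $s=1$. The initial-error contributions $\tfrac12\|e_j^0\|^2$ and $\overline{\nu}\Delta t\,\|\nabla e_j^0\|^2$ are $O(h^2)$ by the hypothesis $\|e_j^0\|,\|\nabla e_j^0\|=O(h)$. Terms with $h^{2k}=h^4$, $h^{2k+2}=h^6$, or $h^{2s+2}=h^4$ are all $O(h^4)$ or $O(h^6)$, hence bounded by $Ch^2$. Terms with an isolated $\Delta t^2$ (the $|\nu_j-\overline{\nu}|^2\Delta t^2\vertiii{\nabla u_{j,t}}_{2,0}^2$ term, the $\overline{\nu}^{-1}\Delta t^2\vertiii{\nabla u_{j,t}}_{2,0}^2$ term, and the $\overline{\nu}^{-1}\Delta t^2\vertiii{u_{j,tt}}_{2,0}^2$ term) are $O(\Delta t^2)$. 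The term $Ch\Delta t\,\vertiii{\nabla u_{j,t}}_{2,0}^2$ is already $O(h\Delta t)$. Finally, the three tail terms in the last two lines of the bound contribute $h^{2k+2}=h^6$, $h^{2k}\Delta t=h^4\Delta t$, and $h^{2k}=h^4$, each dominated by $Ch^2$.

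The one term needing care is $Ch^{2k+1}\Delta t^{-1}\vertiii{u_j}_{2,k+1}^2=Ch^5/\Delta t$. I would handle it by noting that under the stability time-step condition \eqref{conv1}, the practical regime of interest couples $h$ and $\Delta t$ so that $\Delta t\ge c h^3$ (in fact one typically takes $\Delta t\sim h$, well within this range for Taylor--Hood elements); under this mild compatibility $h^5/\Delta t\le C h^2$. Alternatively, one may invoke the elementary inequality $h^5/\Delta t\le h^2+h\Delta t$ valid whenever $\Delta t\ge c h^3$, which absorbs the term cleanly into the two target orders $h^2$ and $h\Delta t$. This is the only non-mechanical step in the argument and is therefore the main obstacle.

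Collecting all the bounds above into one generic constant $C=C(\Omega,T,u_j,p_j,f_j,\overline{\nu},\mu,\epsilon)$ gives
\[
\tfrac12\|e_j^N\|^2+\tfrac{1}{15}\tfrac{\epsilon}{\sqrt{\mu}+\epsilon}\bigl(1-\tfrac{\sqrt{\mu}}{2}\bigr)\overline{\nu}\Delta t\sum_{n=0}^{N-1}\|\nabla e_j^{n+1}\|^2\le C\bigl(h^2+\Delta t^2+h\Delta t\bigr),
\]
which is the desired estimate.
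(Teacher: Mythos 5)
Your overall route --- specialize Theorem \ref{th:errBEFE-Ensemble} to $k=2$, $s=1$, drop the nonnegative $\Vert\nabla e_j^N\Vert^2$ term on the left, absorb the regularity norms and the Gr\"onwall factor into a generic constant, and count powers of $h$ and $\Delta t$ --- is exactly the (unwritten) argument the corollary rests on, and your bookkeeping is correct: the initial errors give $O(h^2)$, the $h^{2k}$, $h^{2s+2}$ and $h^{2k+2}$ terms give $O(h^4)$ or better, and the remaining terms give $O(\Delta t^2)$ and $O(h\Delta t)$. You also (sensibly) read the tail term $(\cdots)\,\overline{\nu}\Delta t\,\vertiii{u_j}_{\infty,k+1}^2$ as carrying the factor $h^{2k}$ that the theorem statement evidently drops (it originates from $\overline{\nu}\Delta t\Vert\nabla\eta_j^N\Vert^2$); taken literally that term would be $O(\Delta t)$ and the corollary could not follow, so your reading is the intended one.

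The one genuine flaw is your justification for absorbing $Ch^{2k+1}\Delta t^{-1}=Ch^5/\Delta t$. You assert that the time-step condition \eqref{conv1} puts you in a regime with $\Delta t\ge ch^3$; it does not --- \eqref{conv1} is an \emph{upper} bound on $\Delta t$ (it restricts $\Delta t\lesssim \overline{\nu}h/\Vert\nabla(u_{j,h}^n-\overline{u}_h^n)\Vert^2$), so it is entirely consistent with, say, $\Delta t=h^4$, in which case $h^5/\Delta t=h$ is not dominated by $C(h^2+\Delta t^2+h\Delta t)$. The coupling you need, $\Delta t\ge ch^3$ (or the natural scaling $\Delta t\sim h$, which is what the paper's experiments use with $\Delta t/h=2/5$), is therefore an additional hypothesis rather than a consequence of \eqref{conv1}, and it must be stated as such. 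To be fair, the corollary as printed leaves this mesh/time-step compatibility implicit as well; your proof is complete once you replace ``implied by \eqref{conv1}'' with an explicit assumption of that form.
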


\section{Numerical experiments}\label{sec:num}

In this section, we illustrate the effectiveness of our proposed method \eqref{First-Order} and the associated theoretical analyses in \S\ref{sec:stab} and \S\ref{sec:err} by considering two examples: a Green-Taylor vortex problem and a flow between two offset cylinders. The first problem has a known exact solution that is used to illustrate the error analysis. The second example does not have an analytic solution but has complex flow structures; it is used to check the stability analysis. 

\subsection{The Green-Taylor vortex problem}

The Green-Taylor vortex flow is commonly used for testing convergence rates, e.g., see \cite{BBG07,B05,Cho68,JT15,JL02,JT15,Tafti}. 
The Green-Taylor vortex solution given by
\begin{align}
&  u(x,y,t)=-\cos(\omega\pi x)\sin(\omega\pi y)e^{-2\omega^{2}\pi^{2}t/\tau
}\nonumber\\
&  v(x,y,t)=\sin(\omega\pi x)\cos(\omega\pi y)e^{-2\omega^{2}\pi^{2}t/\tau
}\label{eq:GreenTaylorVortex}\\
&  p(x,y,t)=-\frac{1}{4}(\cos(2\omega\pi x)+\cos(2\omega\pi y))e^{-4\omega
^{2}\pi^{2}t/\tau} \nonumber
\end{align}
satisfies the NSE in $\Omega=(0,1)^{2}$ for $\tau=Re$ and initial condition 
\[
u^{0}=\big(-\cos(\omega\pi x)\sin(\omega\pi y), \sin(\omega\pi x)\cos(\omega\pi
y)\big)^{\top}.
\]
The solution consists of an
$\omega\times\omega$ array of oppositely signed vortices that decay as
$t\rightarrow\infty$. 
In the following numerical tests, we take $\omega=1$, $\nu=1/Re$, $T=1$, $h=1/m$, and $\Delta t/h=2/5$. The boundary condition is assumed to be inhomogeneous Dirichlet, that is, the boundary values match that of the exact solution.  
%Convergence rates are calculated from the error at two successive
%values of $h$ in the usual manner by postulating $e(h)=Ch^{\beta}$ and solving
%for $\beta$\ via $\beta=\ln(e(h_{1})/e(h_{2}))/\ln(h_{1}/h_{2})$. 

We consider an ensemble of two members, $u_{1}$ and $u_{2}$, corresponding to two incompressible NSE simulations with different viscosity coefficients $\nu_j$ and initial conditions $u_{j, 0}$. 
We investigate the ensemble simulations and compare it with independent simulations. 
For $j=1, 2$, we define by $\mathcal{E}^{E}_j = u_{j}-u_{j, h}$ the approximation error of the $j$-th member of the ensemble simulation and by
$\mathcal{E}^{S}_j = u_{j}-u_{j, h}$ the approximation error of the $j$-th independently determined simulation. Here, the superscript ``$E$" stands for ``ensemble" whereas ``$S$" stands for ``independent."

\paragraph{Case 1}
We set the viscosity coefficient $\nu_1=0.2$ and initial condition $u_{1, 0}= (1+\epsilon)u^0$ for the first member $u_1$ and $\nu_2=0.3$ and $u_{2, 0}= (1-\epsilon)u^0$ for the second member $u_2$, where $\epsilon= 10^{-3}$. 
For this choice of parameters, we have $\vert \nu_j -\overline{\nu}\vert/\overline{\nu} =\frac{1}{5}$ for both $j= 1$ and $j=2$ so that the condition  \eqref{ineq:CFL-h2} is satisfied. 
We first apply the ensemble algorithm; results are shown in Table \ref{tab:t6ensemble}. 
It is seen that the convergence rate for $u_{1}$ and $u_{2}$ is first order. 
%%% ensemble
\begin{table}[htp]
\centering
{\small
\caption{\noindent For the Green-Taylor vortex problem (Case 1) and for a sequence of uniform grid sizes $h$,  errors for ensemble simulations of two members with inputs $\nu_1=0.2$, $u_{1, 0}= (1+10^{-3})u^0$ and $\nu_2=0.3$, $u_{2, 0}= (1-10^{-3})u^0$.}
\label{tab:t6ensemble}%
\begin{tabular}{|c||c|c||c|c||c|c||c|c|}
\hline
$1/h$& $\|\mathcal{E}^{E}_1\|_{\infty, 0}$ & rate & $\|\nabla \mathcal{E}^{E}_1\|_{2,0}$ & rate & $\|\mathcal{E}^{E}_2\|_{\infty, 0}$ & rate & $\|\nabla \mathcal{E}^{E}_2\|_{2,0}$ & rate\\
\hline
$20 $ & $1.05\cdot 10^{-2}$ & --     & $4.17\cdot 10^{-2}$ & --& $7.36\cdot 10^{-3}$ & --     & $2.53\cdot 10^{-2}$ & -- \\
$40 $ & $5.86\cdot10^{-3}$ & 0.85 & $2.21\cdot 10^{-2}$ & 0.91 & $3.87\cdot 10^{-3}$ & 0.93  & $1.31\cdot 10^{-2}$ & 0.95  \\
$80 $ & $3.10\cdot10^{-3}$ & 0.92 & $1.14\cdot 10^{-2}$ & 0.95 & $2.02\cdot 10^{-3}$ & 0.94  & $6.70\cdot 10^{-3}$ & 0.97 \\
$160 $ & $1.59\cdot10^{-3}$ & 0.96 & $5.81\cdot 10^{-3}$ & 0.97 & $1.03\cdot 10^{-3}$ & 0.97  & $3.39\cdot 10^{-3}$ & 0.98 \\
\hline
\end{tabular}
}
\end{table}

We next compare the ensemble simulations with independent simulations. To this end, we perform each NSE simulation independently using the same discretization setup. The associated approximation errors are listed in Table \ref{tab:t6u1}. 
Comparing with Table \ref{tab:t6ensemble}, we observe that the ensemble simulation is able to achieve  accuracies close to that of the independent, more costly simulations.

\begin{table}[htp]
\centering
{\small
\caption{\noindent For the Green-Taylor vortex problem (Case 1) and for a sequence of uniform grid sizes $h$,  errors in independent simulations of two members with inputs $\nu_1=0.2$, $u_{1, 0}= (1+10^{-3})u^0$ and $\nu_2=0.3$, $u_{2, 0}= (1-10^{-3})u^0$.}
\label{tab:t6u1}%
\begin{tabular}{|c||c|c||c|c||c|c||c|c|}
\hline
$1/h$& $\|\mathcal{E}^{S}_1\|_{\infty, 0}$ & rate & $\|\nabla \mathcal{E}^{S}_1\|_{2,0}$ & rate & $\|\mathcal{E}^{S}_2\|_{\infty, 0}$ & rate & $\|\nabla \mathcal{E}^{S}_2\|_{2,0}$ & rate\\
\hline
$20 $ & $1.01\cdot 10^{-2}$ & --     & $3.88\cdot 10^{-2}$ & --  & $7.88\cdot 10^{-3}$ & --     & $2.76\cdot 10^{-2}$ & --\\
$40 $ & $5.47\cdot 10^{-3}$ & 0.89 & $2.04\cdot 10^{-2}$ & 0.93  & $4.24\cdot 10^{-3}$ & 0.90  & $1.44\cdot 10^{-2}$ & 0.93\\
$80 $ & $2.85\cdot 10^{-3}$ & 0.94 & $1.05\cdot 10^{-2}$ & 0.96 &  $2.22\cdot10^{-3}$  & 0.93 & $7.41\cdot 10^{-3}$ &  0.96\\
$160 $ & $1.46\cdot 10^{-3}$ & 0.97 & $5.30\cdot 10^{-3}$ & 0.98  & $1.13\cdot10^{-3}$  & 0.97 & $3.76\cdot 10^{-3}$ &  0.98\\
\hline
\end{tabular}
}
\end{table}

%\begin{table}[htp]
%\centering
%{\small
%\caption{Individual simulation: $\nu= 0.2$ and $u_{1, 0}= (1+10^{-3})u^0$.}
%\label{tab:t6u10}%
%\begin{tabular}{|c|c|c|c|c|}\hline
%$m$ & $\|\mathcal{E}^{I}_1\|_{\infty, 0}$ & rate & $\|\nabla \mathcal{E}^{I}_1\|_{2,0}$ & rate\\
%\hline
%$20 $ & $1.01\cdot 10^{-2}$ & --     & $3.88\cdot 10^{-2}$ & --\\
%$40 $ & $5.47\cdot 10^{-3}$ & 0.89 & $2.04\cdot 10^{-2}$ & 0.93 \\
%$80 $ & $2.85\cdot 10^{-3}$ & 0.94 & $1.05\cdot 10^{-2}$ & 0.96 \\
%$160 $ & $1.46\cdot 10^{-3}$ & 0.97 & $5.30\cdot 10^{-3}$ & 0.98 \\
%\hline
%\end{tabular}
%}
%\end{table}

%
%\begin{table}[htp]
%\centering
%{\small
%\caption{Individual simulation: $\nu= 0.3$ and $u_{2, 0}= (1-10^{-3})u^0$.}
%\label{tab:t6u2}%
%\begin{tabular}{|c|c|c|c|c|}
%\hline
%$m$ & $\|\mathcal{E}^{I}_2\|_{\infty, 0}$ & rate & $\|\nabla \mathcal{E}^{I}_2\|_{2,0}$ & rate\\
%\hline
%$20 $ & $7.88\cdot 10^{-3}$ & --     & $2.76\cdot 10^{-2}$ & --\\
%$40 $ & $4.24\cdot 10^{-3}$ & 0.90  & $1.44\cdot 10^{-2}$ & 0.93  \\
%$80 $ & $2.22\cdot10^{-3}$  & 0.93 & $7.41\cdot 10^{-3}$ &  0.96 \\
%$160 $ & $1.13\cdot10^{-3}$  & 0.97 & $3.76\cdot 10^{-3}$ &  0.98 \\
%\hline
%\end{tabular}
%}
%\end{table}

\paragraph{Case 2}
We now set $\nu_1= 0.01$ and $\nu_2= 0.49$ while keeping the same initial conditions as for Case 1. With this choice of parameters, $\vert \nu_j -\overline{\nu}\vert/\overline{\nu} =\frac{24}{25}$ for both $j= 1$ and $j= 2$, which still satisfies \eqref{ineq:CFL-h2} but is closer to the upper limit. 
The ensemble simulation errors are listed in Table \ref{tab:t2ensemble}, which shows the rate of convergence for the second member is nearly 1 and for the first member is approaching 1. 

%%% ensemble
\begin{table}[htp]
\centering
{\small
\caption{\noindent For the Green-Taylor vortex problem (Case 2) and for a sequence of uniform grid sizes $h$, errors in ensemble simulations of two members: $\nu_1=0.01$, $u_{1, 0}= (1+10^{-3})u^0$ and $\nu_2=0.49$, $u_{2, 0}= (1-10^{-3})u^0$.}
\label{tab:t2ensemble}%
\begin{tabular}{|c|c|c|c|c|c|c|c|c|}
\hline
$1/h$& $\|\mathcal{E}^{E}_1\|_{\infty, 0}$ & rate & $\|\nabla \mathcal{E}^{E}_1\|_{2,0}$ & rate & $\|\mathcal{E}^{E}_2\|_{\infty, 0}$ & rate & $\|\nabla \mathcal{E}^{E}_2\|_{2,0}$ & rate\\
\hline
$20   $ & $2.91\cdot 10^{-2}$ & --     & $2.96\cdot 10^{-1}$ & --& $3.50\cdot 10^{-3}$ & --     & $9.94\cdot 10^{-3}$ & -- \\
$40 $ & $1.86\cdot10^{-2}$ & 0.65  & $1.80\cdot 10^{-1}$ & 0.71 & $1.65\cdot 10^{-3}$ & 1.08 & $4.97\cdot 10^{-3}$ & 1 \\
$80 $ & $1.08\cdot10^{-2}$ & 0.78 & $1.02\cdot 10^{-1}$ & 0.83 & $8.53\cdot 10^{-4}$ & 0.95 & $2.52\cdot 10^{-3}$ & 0.98  \\
$160 $ & $5.89\cdot10^{-3}$ & 0.87  & $5.46\cdot 10^{-2}$ & 0.90 & $4.32\cdot 10^{-4}$ & 0.98 & $1.27\cdot 10^{-3}$ & 0.98  \\
\hline
\end{tabular}
}
\end{table}

The approximation errors for two independent simulations under using the same discretization setup are listed in Table \ref{tab:t2u1}. 
Comparing the ensemble simulation results in Table \ref{tab:t2ensemble} with the independent simulations, we find that the accuracy of first member in the ensemble simulation degrades slightly whereas that of the second member in the ensemble simulation improves a bit. 
Overall, the ensemble simulation is able to achieve the same order of accuracy as the independent simulations.

\begin{table}[htp]
\centering
{\small
\caption{\noindent For the Green-Taylor vortex problem (Case 2) and for a sequence of uniform grid sizes $h$, errors in independent simulations of two members: $\nu_1=0.01$, $u_{1, 0}= (1+10^{-3})u^0$ and $\nu_2=0.49$, $u_{2, 0}= (1-10^{-3})u^0$.}
\label{tab:t2u1}%
\begin{tabular}{|c|c|c|c|c|c|c|c|c|}
\hline
$1/h$& $\|\mathcal{E}^{S}_1\|_{\infty, 0}$ & rate & $\|\nabla \mathcal{E}^{S}_1\|_{2,0}$ & rate & $\|\mathcal{E}^{S}_2\|_{\infty, 0}$ & rate & $\|\nabla \mathcal{E}^{S}_2\|_{2,0}$ & rate\\
\hline
$20   $ & $3.19\cdot 10^{-2}$ & --     & $2.95\cdot 10^{-1}$ & --& $5.49\cdot 10^{-3}$ & --     & $1.79\cdot 10^{-2}$ & --\\
$40 $ & $1.67\cdot10^{-2}$ & 0.93 & $1.54\cdot 10^{-1}$ & 0.93 & $3.03\cdot 10^{-3}$ & 0.86  & $9.38\cdot 10^{-3}$ & 0.94 \\
$80 $ & $8.56\cdot10^{-3}$ & 0.97 & $7.90\cdot 10^{-2}$ & 0.97 & $1.59\cdot10^{-3}$ &  0.93 & $4.81\cdot 10^{-3}$ &  0.96\\
$160 $ & $4.33\cdot10^{-3}$ & 0.98 & $3.99\cdot 10^{-2}$ & 0.98 & $8.18\cdot10^{-4}$ &  0.96 & $2.44\cdot 10^{-3}$ &  0.98\\
\hline
\end{tabular}
}
\end{table}

%\begin{table}[htp]
%\centering
%{\small
%\caption{Individual simulation: $\nu= 0.01$ and $u_{1, 0}= (1+10^{-3})u^0$.}
%\label{tab:t2u1}%
%\begin{tabular}{|c|c|c|c|c|}\hline
%$m$ & $\|\mathcal{E}^{I}_1\|_{\infty, 0}$ & rate & $\|\nabla \mathcal{E}^{I}_1\|_{2,0}$ & rate\\
%\hline
%$20   $ & $3.19\cdot 10^{-2}$ & --     & $2.95\cdot 10^{-1}$ & --\\
%$40 $ & $1.67\cdot10^{-2}$ & 0.93 & $1.54\cdot 10^{-1}$ & 0.93  \\
%$80 $ & $8.56\cdot10^{-3}$ & 0.97 & $7.90\cdot 10^{-2}$ & 0.97 \\
%$160 $ & $4.33\cdot10^{-3}$ & 0.98 & $3.99\cdot 10^{-2}$ & 0.98 \\
%\hline
%\end{tabular}
%}
%\end{table}

%
%\begin{table}[htp]
%\centering
%{\small
%\caption{Individual simulation: $\nu= 0.49$ and $u_{2, 0}= (1-10^{-3})u^0$.}
%\label{tab:t2u2}%
%\begin{tabular}{|c|c|c|c|c|}
%\hline
%$m$ & $\|\mathcal{E}^{I}_2\|_{\infty, 0}$ & rate & $\|\nabla \mathcal{E}^{I}_2\|_{2,0}$ & rate\\
%\hline
%$20  $ & $5.49\cdot 10^{-3}$ & --     & $1.79\cdot 10^{-2}$ & --\\
%$40 $ & $3.03\cdot 10^{-3}$ & 0.86  & $9.38\cdot 10^{-3}$ & 0.94  \\
%$80 $ & $1.59\cdot10^{-3}$ &  0.93 & $4.81\cdot 10^{-3}$ &  0.96 \\
%$160 $ & $8.18\cdot10^{-4}$ &  0.96 & $2.44\cdot 10^{-3}$ &  0.98 \\
%\hline
%\end{tabular}
%}
%\end{table}

%\newpage
\subsection{Flow between two offset cylinders}
Next, we check the stability of our algorithm by considering the problem of a flow between two offset circles \cite{J15, JL14, JL15, JKL15}. The domain is a disk with a smaller off center
obstacle inside. Letting $r_{1}=1$, $r_{2}=0.1$, and $c=(c_{1},c_{2})=(\frac{1}{2}
,0)$, the domain is given by
\[
\Omega=\{(x,y)\,\,:\,\,x^{2}+y^{2}\leq r_{1}^{2} \,\,\text{ and }\,\, (x-c_{1})^{2}%
+(y-c_{2})^{2}\geq r_{2}^{2}\}.
\]
\begin{figure}[h]
\begin{center}
\includegraphics[width=.6\textwidth]{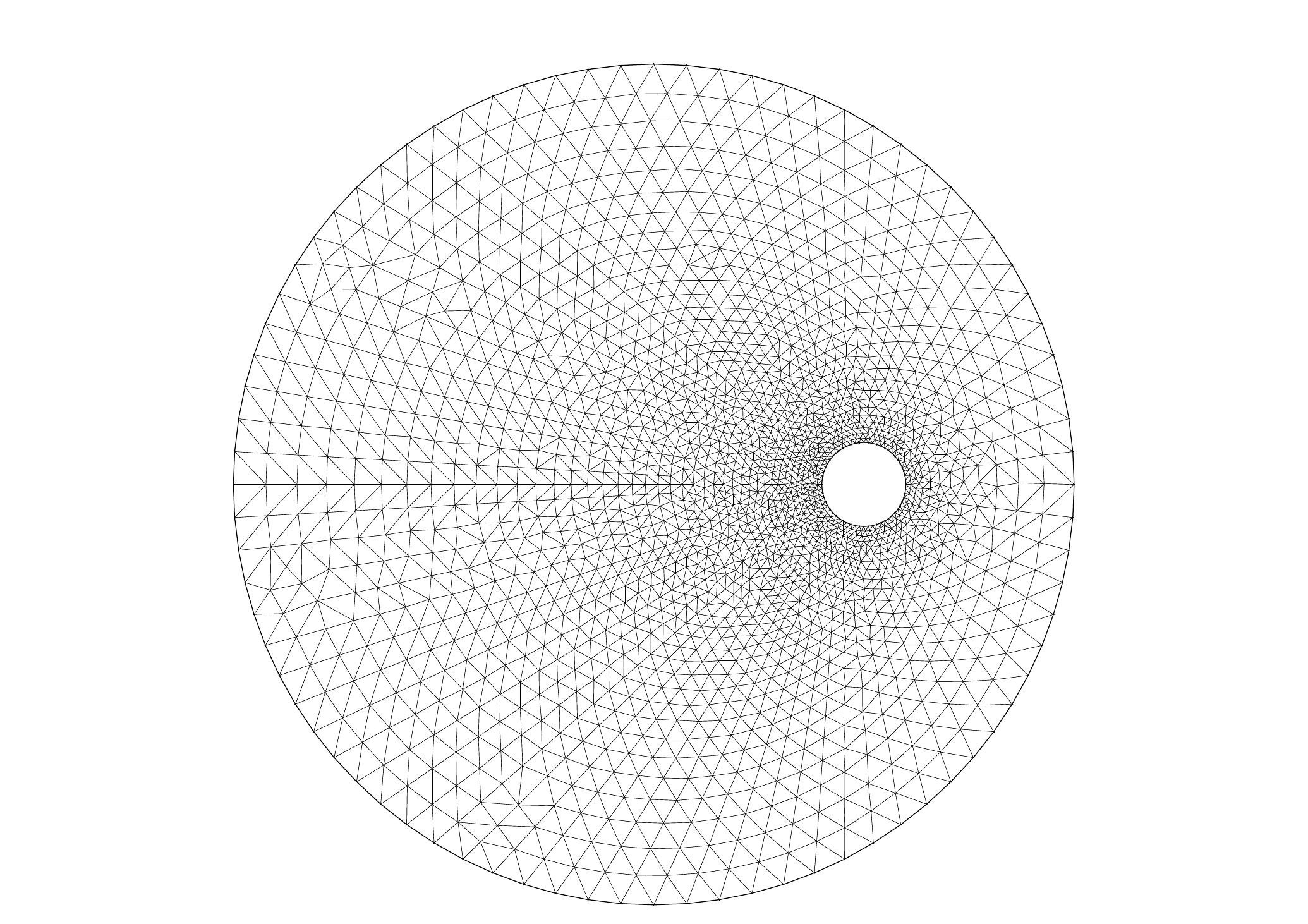}
\end{center}
\caption{Mesh for the flow between two offset cylinders example.}
\label{mesh}
\end{figure}
\noindent The flow is driven by a counterclockwise rotational body force
\[
f(x,y,t)=\big(-6y(1-x^{2}-y^{2}), 6x(1-x^{2}-y^{2})\big)^\top
\]
with no-slip boundary conditions imposed on both circles. The flow between the two circles shows interesting structures interacting with the inner circle. A Von
K$\acute{a}$rm$\acute{a}$n vortex street is formed behind the inner circle and then re-interacts with that circle and with itself, generating complex flow patterns.
We consider multiple numerical simulations of the flow with different viscosity coefficients using the ensemble-based algorithm \eqref{First-Order-h}. 
For spatial discretization, we apply the Taylor-Hood element pair on a triangular mesh that is generated by Delaunay triangulation with $80$ mesh points on the outer circle and $60$ mesh points on the inner circle and with refinement near the inner circle, resulting in $18,638$ degrees of freedom; see Figure \ref{mesh}.

In order to illustrate the stability analysis, we select two different sets of viscosity coefficients for:
\begin{itemize}
\item[] Case 1: \quad $\nu_1=0.005$,\,\, $\nu_2=0.039$,\,\, $\nu_3=0.016$
\item[] Case 2: \quad $\nu_1=0.005$,\,\, $\nu_2=0.041$,\,\, $\nu_3=0.014$.
\end{itemize}
The average of the viscosity coefficients is $\overline{\nu}= 0.02$ for both cases. 
However, the stability condition \eqref{ineq:CFL-h2} is satisfied in the first case but is not satisfied in the second one, i.e., we have 
\begin{itemize}
\item[] Case 1: \quad $\frac{\vert\nu_1-\overline{\nu}\vert}{\overline{\nu}}=\frac{3}{4}$,\,\, $\frac{\vert\nu_2-\overline{\nu}\vert}{\overline{\nu}}=\frac{19}{20}$,\,\,
$\frac{\vert\nu_3-\overline{\nu}\vert}{\overline{\nu}}=\frac{1}{5}$
\item[] Case 2: \quad   $\frac{\vert\nu_1-\overline{\nu}\vert}{\overline{\nu}}=\frac{3}{4}$,\,\, $\frac{\vert\nu_2-\overline{\nu}\vert}{\overline{\nu}}=\frac{21}{20}$,\,\,
$\frac{\vert\nu_3-\overline{\nu}\vert}{\overline{\nu}}=\frac{3}{10}$.
\end{itemize}
The second member of Case 2 has a perturbation ratio greater than 1. 
Simulations of both cases are subject to the same initial condition and body forces for all ensemble members. 
In particular, the initial condition is generated by solving the steady Stokes problem with viscosity $\nu=0.02$ and the same body force $f(x, y, t)$. 
All the simulations are run over the time interval $[0, 5]$ with a time step size $\Delta t= 0.01$. 
For the stability test, we use the kinetic energy as a criterion and compare the ensemble simulation results with independent simulations using the same mesh and time-step size.

The comparison of the energy evolution of ensemble-based simulations with the corresponding independent simulations is shown in Figures \ref{egy_1} and \ref{egy_2}. 
It is seen that, for Case 1, the ensemble simulation is stable, but for Case 2, it becomes unstable. 
This phenomena coincides with our stability analysis since the condition \eqref{ineq:CFL-h2} holds for all members of Case 1, but does not hold for the second member of Case 2. 
Indeed, it is observed from Figure \ref{egy_2} that the energy of the second member in Case 2 blows up after $t=3.7$, then affecting other two members and results in their  energy dramatically increase after $t=4.7$. 

\begin{figure}[h!]
\begin{center}
\includegraphics[width=.8\textwidth]{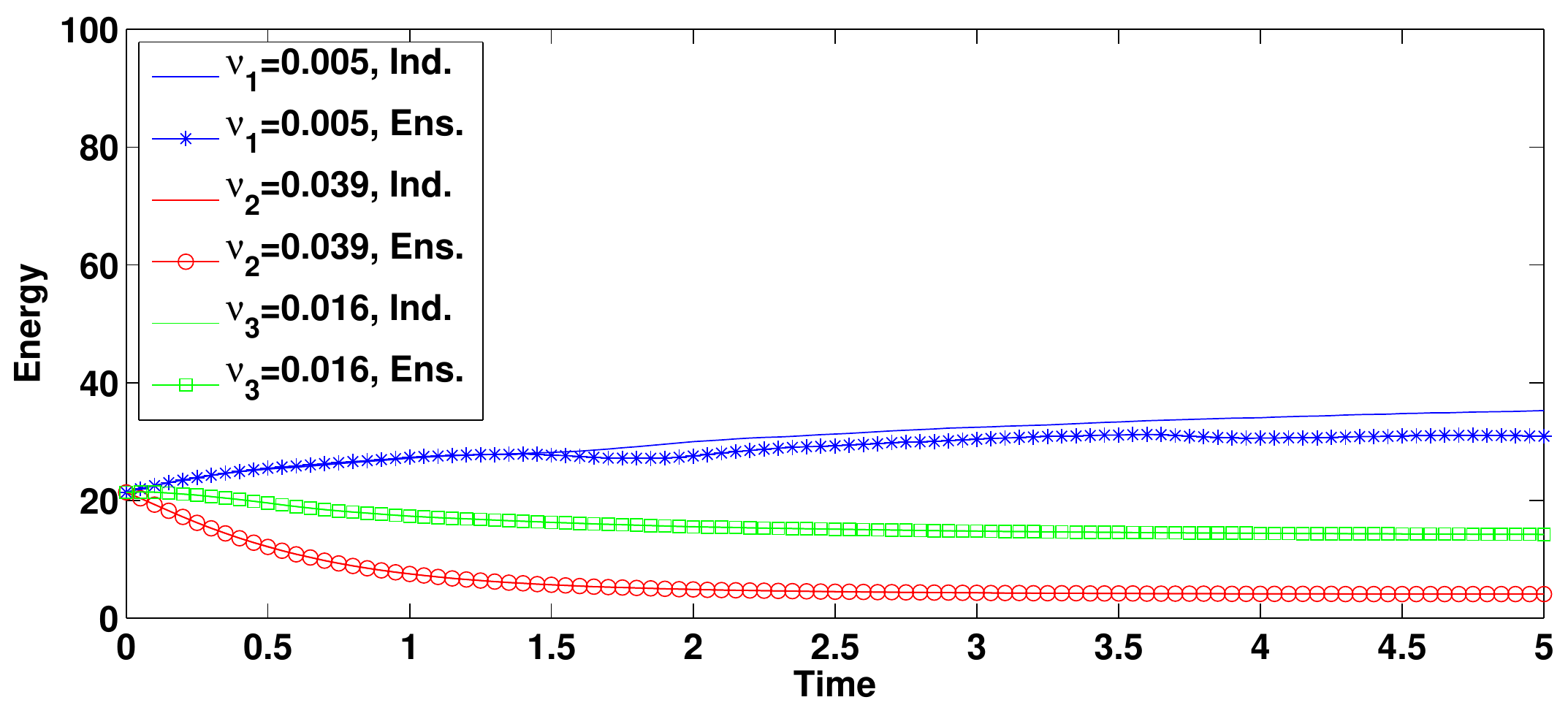}
\end{center}
\caption{For the flow between two offset cylinders, Case 1, the energy evolution of the ensemble (Ens.) and independent simulations (Ind.).}
\label{egy_1}
\end{figure}
\begin{figure}[h!]
\begin{center}
\includegraphics[width=.8\textwidth]{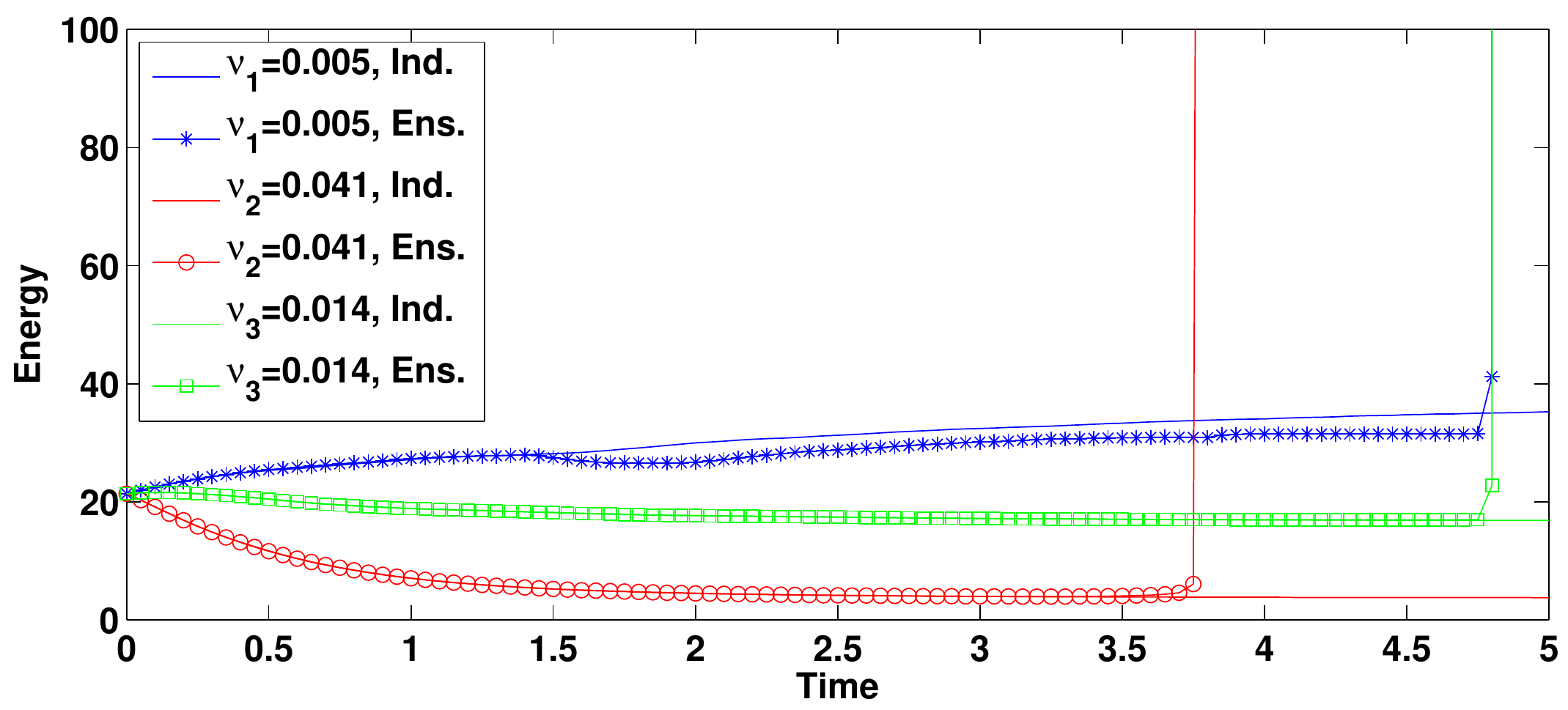}
\end{center}
\caption{For the flow between two offset cylinders, Case 2, the energy evolution of the ensemble (Ens.) and independent simulations (Ind.).}
\label{egy_2}
\end{figure}

%Note that, as shown in Figure \ref{egy_1}, an energy discrepancy between the ensemble simulation and corresponding individual simulation appears after $t=1.5$ when $\nu_1= 0.005$. This is due to the error accumulation and could be improved by using a higher order scheme, which is one of our future work. 
\section{Conclusions}\label{sec:con}
In this paper, we consider a set of Navier-Stokes simulations in which each member may be subject to a distinct viscosity coefficient, initial conditions, and/or body forces. 
An ensemble algorithm is developed for the group by which all the flow ensemble members, after discretization, share a common linear system with different right-hand side vectors. 
This leads to great saving in both storage requirements and computational costs. The stability and accuracy of the ensemble method are analyzed.
Two numerical experiments are presented. 
The first is for Green-Taylor flow and serves to illustrate the first-order accuracy in time of the ensemble-based scheme. The second is for a flow between two offset cylinders and serves to show that our stability analysis is sharp. 
As a next step, we will investigate higher-order accurate schemes for the flow ensemble simulations.

\appendix

\section{Proof of Theorem \ref{th:First-Order}}\label{proofa1}
{\allowdisplaybreaks
%Under the assumption of the LBB$_h$ condition, we restrict $v_h$ in \eqref{First-Order-h} to the space of discretely divergence free functions $V^h$.     
Setting $v_{h}=u_{j,h}^{n+1}$ and $q_h = p_{j, h}^{n+1}$ in (\ref{First-Order-h}) and then adding two equations, we obtain
\begin{equation*}
\begin{aligned}
\frac{1}{2}\Vert u_{j,h}^{n+1}\Vert^{2}&-\frac{1}{2}\Vert u_{j,h}^{n}\Vert
^{2}+\frac{1}{2}\Vert u_{j,h}^{n+1}-u_{j,h}^{n}\Vert^{2}+ \Delta t b^{*}(u_{j,h}^{n}-\overline{u}_{h}^{n}, u_{j,h}^{n},u_{j,h}^{n+1}) \\
&+
\overline{\nu}\Delta t \Vert\nabla u_{j,h}^{n+1}\Vert^{2}= \Delta t (f_{j}^{n+1},
u_{j,h}^{n+1})-\left( \nu_j - \overline{\nu}\right) \Delta t \left( \nabla u_{j,h}^n, \nabla u_{j,h}^{n+1}\right).
\end{aligned}
\end{equation*}
Applying Young's inequality to the terms on the RHS yields, for $\forall\, \alpha, \beta >0$,
\begin{align}
&\frac{1}{2}\Vert u_{j,h}^{n+1}\Vert^{2}-\frac{1}{2}\Vert u_{j,h}^{n}\Vert
^{2}+\frac{1}{2}\Vert u_{j,h}^{n+1}-u_{j,h}^{n}\Vert^{2}+ \overline{\nu}\Delta t \Vert\nabla u_{j,h}^{n+1}\Vert^{2} \nonumber\\
&\hspace{5cm}+ \Delta t b^{*}(u_{j,h}^{n}-\overline{u}_{h}^{n},u_{j,h}^{n},u_{j,h}^{n+1}-u_{j,h}%
^{n})\label{ineq: tri}\\
&\leq\frac{\alpha\overline{\nu}\Delta t}{4} \Vert\nabla u_{j,h}^{n+1} \Vert^{2}+ \frac{\Delta
t}{ \alpha\overline{\nu}} \Vert f_{j}^{n+1}\Vert_{-1}^{2}+\frac{\beta\overline{\nu}\Delta t}{4} \Vert\nabla u_{j,h}^{n+1} \Vert^{2}+\frac{(\nu_j-\overline{\nu})^2\Delta t}{\beta\overline{\nu}}\Vert \nabla u_{j,h}^n\Vert^2. \nonumber
\end{align}
Both $\frac{\beta\overline{\nu}\Delta t}{4} \Vert\nabla u_{j,h}^{n+1} \Vert^{2}$ and $\frac{(\nu_j-\overline{\nu})^2\Delta t}{\beta\overline{\nu}}\Vert \nabla u_{j,h}^n\Vert^2$ on the RHS of \eqref{ineq: tri} need to be absorbed into $\overline{\nu}\Delta t \Vert\nabla u_{j,h}^{n+1}\Vert^{2}$ on the LHS. To this end, we minimize
%the amount of $\Vert \nabla u_{j,h}^{n+1}\Vert^2$ needed, i.e. 
$\frac{\beta\overline{\nu}\Delta t}{4}+\frac{(\nu_j-\overline{\nu})^2\Delta t}{\beta\overline{\nu}}$ by selecting $\beta = \frac{2\vert \nu_j-\overline{\nu}\vert }{\overline{\nu} }$ so that \eqref{ineq: tri} becomes
%%%%%%%%%%%%%
%\newpage
%%%%%%%%%%%%%
\begin{equation}
\label{ineq:s0}
\begin{aligned}
&\frac{1}{2}\Vert u_{j,h}^{n+1}\Vert^{2}-\frac{1}{2}\Vert u_{j,h}^{n}\Vert
^{2}+\frac{1}{2}\Vert u_{j,h}^{n+1}-u_{j,h}^{n}\Vert^{2}+ \overline{\nu}\Delta t \Vert\nabla u_{j,h}^{n+1}\Vert^{2} \\
&\qquad+ \Delta t b^{*}(u_{j,h}^{n}-\overline{u}_{h}^{n},u_{j,h}^{n},u_{j,h}^{n+1}-u_{j,h}%
^{n})\leq\frac{\alpha\overline{\nu}\Delta t}{4} \Vert\nabla u_{j,h}^{n+1} \Vert^{2}\\
&\qquad+ \frac{\Delta
t}{ \alpha\overline{\nu}} \Vert f_{j}^{n+1}\Vert_{-1}^{2}
+\frac{\vert \nu_j-\overline{\nu}\vert\Delta t}{2} \Vert\nabla u_{j,h}^{n+1} \Vert^{2}+\frac{\vert \nu_j-\overline{\nu}\vert \Delta t}{2}\Vert \nabla u_{j,h}^n\Vert^2.
\end{aligned}
\end{equation}
Next, we bound the trilinear term using the inequality \eqref{In2} and
the inverse inequality \eqref{inverse}, obtaining
\begin{equation*}
\begin{aligned}
&- \Delta t b^{*}(u_{j,h}^{n}-\overline{u}_{h}^{n},u_{j,h}^{n},u_{j,h}^{n+1}-u_{j,h}%
^{n})\\
&\qquad\quad\leq C\Delta t\Vert\nabla(u_{j,h}^{n}-\overline{u}_{h}^{n})\Vert\Vert\nabla u_{j,h}%
^{n}\Vert\left(\Vert \nabla ( u_{j,h}^{n+1}-u_{j,h}^{n})\Vert\Vert u_{j,h}^{n+1}-u_{j,h}^{n}\Vert\right)^{1/2}\\
&\qquad\quad\leq C\Delta t\Vert\nabla(u_{j,h}^{n}-\overline{u}_{h}^{n})\Vert\Vert\nabla u_{j,h}%
^{n}\Vert(Ch^{-\frac{1}{2}})\Vert u_{j,h}^{n+1}-u_{j,h}^{n}\Vert.
\end{aligned}
\end{equation*}
Using Young's inequality again gives 
\begin{equation}
\label{ineq:s1}
\begin{aligned}
&- \Delta t b^{*}(u_{j,h}^{n}-\overline{u}_{h}^{n},u_{j,h}^{n},u_{j,h}^{n+1}-u_{j,h}%
^{n})\\
&\qquad\quad\leq C \frac{\Delta t^{2}}{h} \Vert\nabla(u_{j,h}^{n}-\overline{u}_{h}^{n})\Vert^{2}
\Vert\nabla u_{j,h}^{n}\Vert^{2}+ \frac{1}{4}\Vert u_{j,h}^{n+1}-u_{j,h}%
^{n}\Vert^{2}\text{ .}
\end{aligned}
\end{equation}
Substituting \eqref{ineq:s1} into \eqref{ineq:s0} and combining like terms, we have 
\begin{equation*}
\begin{aligned}
&\hspace{-10pt}\frac{1}{2}\Vert u_{j,h}^{n+1}\Vert^{2}-\frac{1}{2}\Vert u_{j,h}^{n}%
||^{2}+\frac{1}{4}\Vert u_{j,h}^{n+1}-u_{j,h}^{n}\Vert^{2}+ \overline{\nu}\Delta
t \left(1-\frac{\alpha}{4}-\frac{\vert \nu_j-\overline{\nu}\vert}{2 \overline{\nu}}\right)\Vert\nabla u_{j,h}^{n+1}\Vert^{2}\\
&\quad\leq\frac{\Delta t}{\alpha\overline{\nu}} \Vert f_{j}^{n+1}\Vert_{-1}^{2} + C \frac{\Delta
t^{2}}{h} \Vert\nabla( u_{j,h}^{n}-\overline{u}_{h}^{n})\Vert^{2} \Vert\nabla
u_{j,h}^{n}\Vert^{2}+\frac{\vert \nu_j-\overline{\nu}\vert \Delta t}{2}\Vert \nabla u_{j,h}^n\Vert^2.
\end{aligned}
\end{equation*}
For any $0<\sigma <1$, 
\begin{equation}
\label{ineq:stable01}
\begin{aligned}
\frac{1}{2}&\Vert u_{j,h}^{n+1}\Vert^{2}-\frac{1}{2}\Vert u_{j,h}^{n}\Vert
^{2}+\frac{1}{4}\Vert u_{j,h}^{n+1}-u_{j,h}^{n}\Vert^{2}\\
&+ \overline{\nu}\Delta t \Big(1-\frac{\alpha}{4}-\frac{\vert \nu_j-\overline{\nu}\vert}{2 \overline{\nu}}\Big)\left(\Vert\nabla u_{j,h}^{n+1}\Vert^{2}-\Vert\nabla u_{j,h}^{n}\Vert
^{2}\right)\\
&+\overline{\nu}\Delta t \left((1-\sigma)\Big(1-\frac{\alpha}{4}-\frac{\vert \nu_j-\overline{\nu}\vert}{2 \overline{\nu}}\Big)-\frac{C \Delta
t}{\overline{\nu} h}\Vert\nabla(u_{j,h}^{n}-\overline{u}_{h}^{n})\Vert^{2}\right) \Vert\nabla
u_{j,h}^{n}\Vert^{2}\\
&+\overline{\nu} \Delta t \left(\sigma\Big(1-\frac{\alpha}{4}-\frac{\vert \nu_j-\overline{\nu}\vert}{2 \overline{\nu}}\Big)-\frac{\vert \nu_j-\overline{\nu}\vert}{2 \overline{\nu}}\right)\Vert \nabla u_{j,h}^n\Vert^2\leq\frac{\Delta t}{ \alpha\overline{\nu}} \Vert f_{j}^{n+1}\Vert
_{-1}^{2}\text{ .}
\end{aligned}
\end{equation}
Select $\alpha=4-\frac{2(\sigma+1)}{\sigma}\sqrt{\mu}$. Since $\alpha$ is supposed to be greater than $0$, we have
\begin{align}
\sigma >\frac{\sqrt{\mu}}{2-\sqrt{\mu}} \in (0,1)
\end{align}
%Now taking $\sigma= \frac{\sqrt{\mu}}{2-\sqrt{\mu}} + \frac{1}{2}(1-\frac{\sqrt{\mu}}{2-\sqrt{\mu}})= \frac{1}{2}+\frac{\sqrt{\mu}}{4-2\sqrt{\mu}} = \frac{1}{2-\sqrt{\mu}}\in (\frac{1}{2},1)$, 
Now taking $\sigma= \frac{\sqrt{\mu}+\epsilon}{2-\sqrt{\mu}}$, where $\epsilon \in (0, 2-2\sqrt{\mu})$ , 
\eqref{ineq:stable01} becomes
\begin{equation}
\label{ineq:stable001}
\begin{aligned}
\frac{1}{2}&\Vert u_{j,h}^{n+1}\Vert^{2}-\frac{1}{2}\Vert u_{j,h}^{n}\Vert
^{2}+\frac{1}{4}\Vert u_{j,h}^{n+1}-u_{j,h}^{n}\Vert^{2}\\
&+ \overline{\nu}\Delta t \Big(1-\frac{\alpha}{4}-\frac{\vert \nu_j-\overline{\nu}\vert}{2 \overline{\nu}}\Big)\left(\Vert\nabla u_{j,h}^{n+1}\Vert^{2}-\Vert\nabla u_{j,h}^{n}\Vert
^{2}\right)\\
&+\overline{\nu}\Delta t \bigg(\Big( (1-\sigma)\frac{\sigma+1}{2\sigma}\sqrt{\mu}-(1-\sigma)\frac{\vert \nu_j-\overline{\nu}\vert}{2 \overline{\nu}}\Big)
\\&-\frac{C \Delta
t}{\overline{\nu} h}\Vert\nabla(u_{j,h}^{n}-\overline{u}_{h}^{n})\Vert^{2}\bigg) \Vert\nabla
u_{j,h}^{n}\Vert^{2}\\
&+\overline{\nu} \Delta t \Big( \frac{\sigma+1}{2}\sqrt{\mu}-(1+\sigma)\frac{\vert \nu_j-\overline{\nu}\vert}{2\overline{\nu}}\Big)\Vert \nabla u_{j,h}^n\Vert^2\leq\frac{\Delta t}{ \alpha\overline{\nu}} \Vert f_{j}^{n+1}\Vert
_{-1}^{2}\text{ .}
\end{aligned}
\end{equation}
Stability follows if the following conditions hold:
\begin{gather}
(1-\sigma)\frac{\sigma+1}{2\sigma}\sqrt{\mu}-(1-\sigma)\frac{1}{2}\frac{\vert \nu_j-\overline{\nu}\vert}{\overline{\nu}}-\frac{C \Delta t}{\overline{\nu} h}\Vert\nabla(u_{j,h}^{n}
-\overline{u}_{h}^{n})\Vert^{2} \geq0\label{cond1}\\
  \frac{\sigma+1}{2}\sqrt{\mu}-(1+\sigma)\frac{\vert \nu_j-\overline{\nu}\vert}{2\overline{\nu}}\geq 0.\label{cond2}
\end{gather}
Using assumption  (\ref{ineq:CFL-h2}), we have
\begin{gather*}
\frac{\sigma+1}{2}\sqrt{\mu}-(1+\sigma)\frac{\vert \nu_j-\overline{\nu}\vert}{2\overline{\nu}}
 = \frac{2+\epsilon}{2-\sqrt{\mu}}\left(\frac{\sqrt{\mu}}{2}-\frac{|\nu_j-\overline{\nu}|}{2\overline{\nu}}\right)
 \geq 0
%\frac{\sigma+1}{2}\sqrt{\mu}-(1+\sigma)\frac{\sqrt{\mu}}{2} \geq 0.
\end{gather*}
so that \eqref{cond1} holds.
Together with assumption  \eqref{ineq:CFL-h1}, we then have
$$
\begin{aligned}
&(1-\sigma)\frac{\sigma+1}{2\sigma}\sqrt{\mu}-(1-\sigma)\frac{1}{2}\frac{\vert \nu_j-\overline{\nu}\vert}{\overline{\nu}}-\frac{C \Delta t}{\overline{\nu} h}\Vert\nabla(u_{j,h}^{n}
-\overline{u}_{h}^{n})\Vert^{2} \\&\qquad
\geq (1-\sigma)\frac{\sigma+1}{2\sigma}\sqrt{\mu}-(1-\sigma)\frac{1}{2}\sqrt{\mu}-\frac{C \Delta t}{\overline{\nu} h}\Vert\nabla(u_{j,h}^{n}
-\overline{u}_{h}^{n})\Vert^{2} \\&\qquad
=\frac{(2-2\sqrt{\mu}-\epsilon)\sqrt{\mu}}{2(\sqrt{\mu}+\epsilon)}-\frac{(2-2\sqrt{\mu}-\epsilon)\sqrt{\mu}}{2(\sqrt{\mu}+\epsilon)}=0
\end{aligned}
$$
so that \eqref{cond2} holds. Therefore, assuming that  \eqref{ineq:CFL-h1} and \eqref{ineq:CFL-h2} hold,  (\ref{ineq:stable001}) reduces to
\begin{equation}
\label{ineq:stable02}
\begin{aligned}
\frac{1}{2}&\Vert u_{j,h}^{n+1}\Vert^{2}-\frac{1}{2}\Vert u_{j,h}^{n}\Vert
^{2}+\frac{1}{4}\Vert u_{j,h}^{n+1}-u_{j,h}^{n}\Vert^{2}%
\\
&+  
\overline{\nu}\Delta t \left(\frac{\sqrt{\mu}}{2}\frac{2+\epsilon}{\sqrt{\mu}+\epsilon}-\frac{\vert \nu_j-\overline{\nu}\vert}{2 \overline{\nu}}\right)
\left(\Vert\nabla u_{j,h}^{n+1}\Vert^{2}-\Vert\nabla u_{j,h}^{n}\Vert^{2}\right)  
\leq\frac{\Delta t}{ \overline{\nu}} \Vert f_{j}^{n+1}\Vert_{-1}^{2}\text{
.}
\end{aligned}
\end{equation}
Summing up (\ref{ineq:stable02}) from $n=0$ to $n=N-1$ results in
\begin{equation}
\begin{split}
\frac{1}{2}&\|u_{j,h}^{N}\|^{2}+\frac{1}{4}\sum_{n=0}^{N-1}\|u_{j,h}%
^{n+1}-u_{j,h}^{n}\|^{2}
+\overline{\nu}\Delta t \left(\frac{\sqrt{\mu}}{2}\frac{2+\epsilon}{\sqrt{\mu}+\epsilon}-\frac{\vert \nu_j-\overline{\nu}\vert}{2 \overline{\nu}}\right)
\|\nabla u_{j,h}^{N}\|^{2}\\
&\leq\sum_{n=0}^{N-1}\frac{\Delta t}{\overline{\nu}}\|f_{j}^{n+1}\|_{-1}^{2}+ \frac{1}%
{2}\|u_{j,h}^{0}\|^{2}
+\overline{\nu}\Delta t \left(\frac{\sqrt{\mu}}{2}\frac{2+\epsilon}{\sqrt{\mu}+\epsilon}-\frac{\vert \nu_j-\overline{\nu}\vert}{2 \overline{\nu}}\right)\|\nabla u_{j,h}^{0}\|^{2}\text{
.}
\end{split}
\label{eq:sta}
\end{equation}
This completes the proof of stability.
}

\section{Proof of Theorem \ref{th:errBEFE-Ensemble}}\label{proofa2}
{\allowdisplaybreaks
The weak solution of the NSE $u_{j}$ satisfies%
\begin{equation}
\label{eq:convtrue}
\begin{aligned}
\Big(\frac{u_{j}^{n+1}-u_{j}^{n}}{\Delta t}, v_{h}\Big)&+ b^{*}(u_{j}^{n+1},
u_{j}^{n+1}, v_{h})+ \nu_j(\nabla u_{j}^{n+1}, \nabla v_{h})- (p_{j}%
^{n+1},\nabla\cdot v_{h})\\
&=(f_{j}^{n+1}, v_{h})
+ \text{Intp}(u_{j}^{n+1};v_{h})\text{ , }\quad\forall v_{h}\in V_{h},
\end{aligned}
\end{equation}
where 
$\text{Intp}(u_{j}^{n+1};v_{h})= \big(\frac{u_{j}^{n+1}-u_{j}^{n}}{\Delta t}-u_{j,t}%
(t^{n+1}),v_{h}\big)$. 

Let
$$
e_{j}^{n}=u_{j}^{n}-u_{j,h}^{n}=(u_{j}^{n}-I_{h} u_{j}^{n})+(I_{h} u_{j}%
^{n}-u_{j,h}^{n})=\eta_{j}^{n}+\xi_{j,h}^{n} ,
$$
where $I_{h} u_{j}^{n} \in V_{h} $ is the interpolant of $u_{j}^{n}$
in $V_{h}.$
Subtracting (\ref{eq: conv}) from (\ref{eq:convtrue}) gives%
\begin{align*}
&\Big(\frac{\xi_{j,h}^{n+1}-\xi_{j,h}^{n}}{ \Delta t},v_{h}\Big) +\overline{\nu}(\nabla\xi
_{j,h}^{n+1},\nabla v_{h})+(\nu_j-\overline{\nu})(\nabla (u_{j}^{n+1}-u_{j}^n),\nabla v_{h})\nonumber\\
&\qquad+(\nu_j-\overline{\nu})(\nabla \xi_{j,h}^n,\nabla v_h)
+b^{*}(u_{j}^{n+1},u_{j}^{n+1},v_{h})-b^{*}(\overline{u}_{h}^{n},u_{j,h}^{n+1},v_{h}) \nonumber\\
&\qquad-b^{*}(u_{j,h}^{n}-\overline{u}_{h}^n,u_{j,h}
^{n},v_{h})-(p_{j}^{n+1},\nabla\cdot v_{h})\nonumber\\
&\qquad=-(\frac{\eta_{j}^{n+1}-\eta_{j}^{n}}{\Delta t},v_{h}) -\overline{\nu}(\nabla\eta
_{j}^{n+1},\nabla v_{h})-(\nu_j-\overline{\nu})(\nabla\eta
_{j}^{n},\nabla v_{h}) 
+\text{Intp}(u_j^{n+1};v_{h}).\nonumber
\end{align*}
Setting $v_{h}=\xi_{j,h}^{n+1}\in V_{h}$ and rearranging the nonlinear
terms, we have
\begin{equation}
\label{eq:err1}
\begin{split}
&\frac{1}{\Delta t}\left(\frac{1}{2}||\xi_{j,h}^{n+1}||^{2}-\frac{1}{2}||\xi
_{j,h}^{n}||^{2}+\frac{1}{2}\|\xi_{j,h}^{n+1}-\xi_{j,h}^{n}\|^{2}\right)+\overline{\nu}
||\nabla\xi_{j,h}^{n+1}||^{2}\\
&\qquad=-(\nu_j-\overline{\nu})(\nabla (u_{j}^{n+1}-u_{j}^n),\nabla \xi_{j,h}^{n+1})
-(\nu_j-\overline{\nu})(\nabla \xi_{j,h}^n,\nabla \xi_{j,h}^{n+1})\\
&\qquad\quad-\overline{\nu}
(\nabla\eta_{j}^{n+1},\nabla\xi_{j,h}^{n+1})-(\nu_j-\overline{\nu})(\nabla\eta
_{j}^{n},\nabla \xi_{j,h}^{n+1})\\
&\qquad\quad-b^{*}(u_{j,h}^n-\overline{u}_h^n,u_{j,h}^{n+1}-u_{j,h}^{n},\xi_{j,h}^{n+1})\\
&\qquad\quad-b^{*}(u_{j}^{n+1},u_{j}^{n+1},\xi_{j,h}^{n+1})+b^{*}(u_{j,h}^{n}%
,u_{j,h}^{n+1},\xi_{j,h}^{n+1})+(p_{j}
^{n+1},\nabla\cdot\xi_{j,h}^{n+1})\\
&\qquad\quad-(\frac{\eta_{j}^{n+1}-\eta_{j}^{n}}{ \Delta t},\xi_{j,h}^{n+1}) 
+\text{Intp}(u_j^{n+1};\xi_{j,h}^{n+1}).
\end{split}
\end{equation}
We first bound the viscous terms on the RHS of \eqref{eq:err1}:
\begin{equation}\label{firstineq}
\begin{aligned}
-(\nu_j-\overline{\nu})& (\nabla (u_{j}^{n+1}-u_j^n), \nabla\xi_{j,h}^{n+1})\leq \vert \nu_j-\overline{\nu}\vert \|\nabla (u_j^{n+1}-u_j^n)\| \|\nabla\xi_{j,h}^{n+1}\| \\
&\leq  \frac{1}{4C_0}\frac{\vert \nu_j-\overline{\nu}\vert^2}{\overline{\nu}}\|\nabla(u_{j}^{n+1}-u_j^n)\|^{2} 
+ C_0\overline{\nu}\|\nabla\xi_{j,h}^{n+1}\|^{2}  \\
&\leq \frac{\Delta t}{4C_0}\frac{\vert \nu_j-\overline{\nu}\vert^2}{\overline{\nu}} \left(\int_{t^{n}}^{t^{n+1}}\| \nabla u_{j,t}\|^{2}\, dt\right)+ C_0\overline{\nu}\|\nabla\xi_{j,h}^{n+1}\|^{2}
\end{aligned}
\end{equation}
\begin{equation}
\begin{aligned}
-\overline{\nu}(\nabla\eta_{j}^{n+1},\nabla\xi_{j,h}^{n+1}) &\leq\overline{\nu}\|\nabla\eta_{j}
^{n+1}\| \|\nabla\xi_{j,h}^{n+1}\| \\
&\leq \frac{\overline{\nu}}{4C_0}\|\nabla\eta_{j}^{n+1}\|^{2}+ C_0 \overline{\nu}\|\nabla\xi_{j,h}%
^{n+1}\|^{2} 
\end{aligned}
\end{equation}
\begin{equation}
\begin{aligned}
-(\nu_j-\overline{\nu})(\nabla\eta_{j}^{n},\nabla\xi_{j,h}^{n+1}) &\leq\vert \nu_j-\overline{\nu}\vert \|\nabla\eta_{j}
^{n+1}\| \|\nabla\xi_{j,h}^{n+1}\| \\
&\leq \frac{1}{4C_0}\frac{\vert \nu_j-\overline{\nu}\vert^2}{\overline{\nu}}\|\nabla\eta_{j}^{n+1}\|^{2}
+ C_0\overline{\nu}\|\nabla\xi_{j,h}^{n+1}\|^{2} 
\end{aligned}
\end{equation}
\begin{equation}\label{ineq:err2}
\begin{aligned}
-(\nu_j-\overline{\nu})(\nabla\xi_{j,h}^{n},\nabla\xi_{j,h}^{n+1}) &\leq\vert \nu_j-\overline{\nu}\vert \|\nabla\xi_{j,h}
^{n}\| \|\nabla\xi_{j,h}^{n+1}\| \\
&\leq \frac{1}{4C_1}\frac{\vert \nu_j-\overline{\nu}\vert^2}{\overline{\nu}} \|\nabla\xi_{j,h}^{n}\|^{2}+ C_1\overline{\nu}\|\nabla\xi_{j,h}%
^{n+1}\|^{2}  \\
&\leq 
\frac{\vert \nu_j-\overline{\nu}\vert}{2} \|\nabla\xi_{j,h}^{n}\|^{2}+ \frac{\vert \nu_j-\overline{\nu}\vert}{2} \|\nabla\xi_{j,h}^{n+1}\|^{2} 
\end{aligned}
\end{equation}
in which we note that both terms on the RHS of \eqref{ineq:err2} need to be hidden in the LHS of the error equation, thus $C_1= \frac{|\nu_j-\overline{\nu}|}{2\nu}$ is selected to minimize the summation.  

Next we analyze the nonlinear terms on the RHS of \eqref{eq:err1} one by one. 
For the first nonlinear term, we have 
\begin{equation}\label{ineq:err3}
\begin{aligned}
&-b^*(u_{j,h}^n-\overline{u}_h^n, u_{j,h}^{n+1}-u_{j,h}^n, \xi_{j,h}^{n+1})\\
=&-b^*(u_{j,h}^n-\overline{u}_h^n, e_{j}^{n+1}-e_{j}^n, \xi_{j,h}^{n+1}) +b^*(u_{j,h}^n-\overline{u}_h^n, u_{j}^{n+1}-u_{j}^n, \xi_{j,h}^{n+1})\\
=&-b^*(u_{j,h}^n-\overline{u}_h^n, \eta_{j}^{n+1}, \xi_{j,h}^{n+1})+b^*(u_{j,h}^n-\overline{u}_h^n, \eta_{j}^{n}, \xi_{j,h}^{n+1})\\
&\quad+b^*(u_{j,h}^n-\overline{u}_h^n, \xi_{j}^{n}, \xi_{j,h}^{n+1})+b^*(u_{j,h}^n-\overline{u}_h^n, u_{j}^{n+1}-u_{j}^n, \xi_{j,h}^{n+1})\,.
\end{aligned}
\end{equation}
Using inequality \eqref{In1} and Young's inequality, we have the estimates
\begin{equation}
\begin{aligned}
-b^*(u_{j,h}^n-\overline{u}_h^n, \eta_{j}^{n+1}&, \xi_{j,h}^{n+1}) \leq C\|\nabla (u_{j,h}^n-\overline{u}_h^n)\|\|\nabla\eta_{j}^{n+1}\|\|\nabla\xi_{j,h}^{n+1}\|\\
&\leq
\frac{C^2}{4C_0} \overline{\nu}^{-1}\|\nabla (u_{j,h}^n-\overline{u}_h^n)\|^{2}\|\nabla\eta_{j}^{n+1}\|^{2}
+C_0\overline{\nu}\|\nabla\xi_{j,h}^{n+1}\|^{2}
\end{aligned}
\end{equation}
\begin{equation}
\begin{aligned}
-b^*(u_{j,h}^n-\overline{u}_h^n, \eta_{j}^{n}, \xi_{j,h}^{n+1}) &\leq C\|\nabla (u_{j,h}^n-\overline{u}_h^n)\|\|\nabla\eta_{j}^{n}\|\|\nabla\xi_{j,h}^{n+1}\|\\
&\leq
\frac{C^2}{4C_0} \overline{\nu}^{-1}\|\nabla (u_{j,h}^n-\overline{u}_h^n)\|^{2}\|\nabla\eta_{j}^{n}\|^{2}
+ C_0\overline{\nu}\|\nabla\xi_{j,h}^{n+1}\|^{2}.
\end{aligned}
\end{equation}
Because $b^*(\cdot,\cdot,\cdot)$ is skew-symmetric, we have
\begin{align*}
b^{\ast}(u_{j,h}^n-\overline{u}_h^n,\xi_{j,h}^{n},\xi_{j,h}^{n+1})&=b^{\ast}(u_{j,h}^n-\overline{u}_h^n,\xi
_{j,h}^{n}-\xi_{j,h}^{n+1},\xi_{j,h}^{n+1}) \\
&=b^{\ast}(u_{j,h}^n-\overline{u}_h^n,\xi_{j,h}^{n+1},\xi_{j,h}^{n+1}-\xi_{j,h}^n)\,.
\end{align*}
Then, by inequality \eqref{In1}, we obtain
\begin{equation}\label{inver}
\begin{aligned}
 b^{\ast}(u_{j,h}^n-&\overline{u}_h^n,\xi_{j,h}^{n},\xi_{j,h}^{n+1})  \\
\leq & \Vert \nabla (u_{j,h}^n-\overline{u}_h^n)\Vert\Vert \nabla \xi_{j,h}^{n}\Vert
\Vert \nabla (\xi_{j,h}^{n+1}-\xi_{j,h}^n)\Vert^{1/2}\Vert  \xi_{j,h}^{n+1}-\xi_{j,h}^n\Vert^{1/2}\\
\leq & C\Vert \nabla (u_{j,h}^n-\overline{u}_h^n)\Vert\Vert \nabla \xi_{j,h}^{n}\Vert (h)^{-1/2}\Vert  \xi_{j,h}^{n+1}-\xi_{j,h}^n\Vert\\
\leq & \frac{1}{4\triangle t}\Vert\xi_{j,h}^{n+1}-\xi_{j,h}^{n}\Vert^{2}+\left(
C\frac{\triangle t}{h}\Vert\nabla (u_{j,h}^n-\overline{u}_h^n)\Vert^{2}\right)  \Vert\nabla
\xi_{j,h}^{n}\Vert^{2}.
\end{aligned}
\end{equation}
For the last term of \eqref{ineq:err3}, we have
\begin{align}
b^{*}(u_{j,h}^n-\overline{u}_h^n,&u_{j}^{n+1}-u_{j}^{n},\xi_{j,h}^{n+1}) \leq C\|\nabla
(u_{j,h}^n-\overline{u}_h^n)\|\|\nabla(u_{j}^{n+1}-u_{j}^{n})\|\|\nabla\xi_{j,h}^{n+1}%
\|\nonumber\\
&\leq 
\frac{C^2}{4C_0}\overline{\nu}^{-1}\|\nabla (u_{j,h}^n-\overline{u}_h^n)\|^{2}\|\nabla (u_{j}^{n+1}-u_{j}^{n})\|^{2} 
+C_0\overline{\nu}\|\nabla\xi_{j,h}^{n+1}\|^{2} \\
&\leq
\frac{C^2 \Delta t}{4C_0}\overline{\nu}^{-1}\|\nabla (u_{j,h}^n-\overline{u}_h^n)\|^{2}\left(\int_{t^{n}}^{t^{n+1}}\| \nabla u_{j,t}\|^{2} \, dt\right) 
+C_0\overline{\nu} \|\nabla\xi_{j,h}^{n+1}\|^{2}
.\nonumber
\end{align}
Next, we bound the last two nonlinear terms on the RHS of \eqref{eq:err1} as follows:
\begin{equation}\label{eq:nonlinear}
\begin{aligned}
-&b^{*}(u_{j}^{n+1}, u_{j}^{n+1},\xi_{j,h}^{n+1})
+b^{*}(u_{j,h}^{n},u_{j,h}^{n+1},\xi_{j,h}^{n+1})\\
& = -b^{*}(e_{j}^{n},u_{j}^{n+1},\xi_{j,h}^{n+1})
-b^{*}(u_{j,h}^{n},e_{j}^{n+1},\xi_{j,h}^{n+1})
-b^{*}(u^{n+1}_{j}-u_j^{n}, u_{j}^{n+1}, \xi_{j,h}^{n+1}) \\
&= -b^{*}(\eta^{n}_j,u_{j}^{n+1},\xi_{j,h}^{n+1})-b^{*}(\xi_{j,h}^{n},u_{j}^{n+1},\xi_{j,h}^{n+1})\\
&\qquad -b^{*}(u^{n}_{j,h}, \eta_{j}^{n+1}, \xi_{j,h}^{n+1})-b^{*}(u^{n+1}_{j}-u^{n}_j,
u_{j}^{n+1}, \xi_{j,h}^{n+1}) ,
\end{aligned}
\end{equation}
where, with the assumption $u_j^{n+1}\in L^{\infty}(0,T; H^1(\Omega))$, we have 
\begin{equation}
\begin{aligned}
-b^{*}(\eta_{j}^{n},u_{j}^{n+1},\xi_{j,h}^{n+1}) 
&\leq 
C\|\nabla \eta_{j}^{n}\|\|\nabla u_{j}^{n+1}\|\|\nabla\xi_{j,h}^{n+1}\|  \\
&\leq
\frac{C^2}{4C_0}\overline{\nu}^{-1}\|\nabla\eta_{j}^{n}\|^{2} + C_0\overline{\nu}\|\nabla\xi_{j,h}^{n+1}\|^{2}.
\end{aligned}
\end{equation}
Using the inequality \eqref{In1}, Young's inequality, and $u_j^{n+1}\in L^{\infty}(0,T; H^1(\Omega))$, we get
\begin{align}
-b^{*}(\xi^{n}_{j,h}, u_{j}^{n+1}, \xi_{j,h}^{n+1}) &\leq C\| \nabla\xi
^{n}_{j,h} \|^{1/2} \Vert\xi^{n}_{j,h}\Vert^{1/2}\|\nabla
u_{j}^{n+1}\|\|\nabla\xi_{j,h}^{n+1}\|\nonumber\\
&\leq C\| \nabla\xi^{n}_{j,h} \|^{1/2} \Vert\xi^{n}_{j,h}\Vert
^{1/2}\|\nabla\xi_{j,h}^{n+1}\|\nonumber\\
&\leq 
C\Big(\frac{1}{4\alpha}\|\nabla\xi^{n}_{j,h} \|\|\xi^{n}_{j,h} \| + \alpha\|\nabla\xi_{j,h}^{n+1}\|^{2}\Big)\\
&\leq 
C\Big(\frac{1}{4\alpha} \big(\frac{\delta}{2}\|\nabla\xi^{n}_{j,h} \|^{2}+\frac{1}{2\delta}\|\xi^{n}_{j,h} \|^2 \big)
+ \alpha\|\nabla\xi_{j,h}^{n+1}\|^{2} \Big)\nonumber\\
&\leq
C_0\overline{\nu} \|\nabla \xi^{n}_{j,h} \|^{2}+\frac{C^4}{64C_0^3\overline{\nu}^{3}}\|\xi^{n}_{j,h} \|^{2} + C_0 \overline{\nu} \|\nabla\xi_{j,h}^{n+1}\|^{2}\, ,\nonumber
\end{align}
where we set $\alpha= \frac{C_0\overline{\nu}}{C}$ and $\delta= \frac{8C_0^2\overline{\nu}^2}{C^2}$. 
By Young's inequality, \eqref{In1}, and the result \eqref{eq:sta} from the stability analysis, i.e., $\Vert u_{j,h}^n \Vert^2 \leq C$, we also have
\begin{equation}
\begin{aligned}
b^{*}(u^{n}_{j,h}, \eta_{j}^{n+1}, \xi_{j,h}^{n+1}) 
&\leq 
C\|\nabla u^{n}_{j,h}\|^{1/2}\Vert u_{j,h}^n\Vert^{1/2}\|\nabla \eta_{j}^{n+1}\|\|\nabla\xi_{j,h}^{n+1}\|\\
&\leq
\frac{C^2}{4C_0}\overline{\nu}^{-1}\|\nabla u_{j,h}^{n}\|\|\nabla\eta^{n+1}_{j}\|^{2} + C_0\overline{\nu}\|\nabla\xi_{j,h}^{n+1}\|^{2}
\end{aligned}
\end{equation}
and 
\begin{align}
b^{*}(u_{j}^{n+1}-u_{j}^{n},u_{j}^{n+1},\xi_{j,h}^{n+1}) 
&\leq 
 C\|\nabla (u_{j}^{n+1}-u_{j}^{n})\|\|\nabla u_{j}^{n+1}\|\|\nabla\xi_{j,h}%
^{n+1}\|\nonumber\\
\leq & \frac{C^2}{4C_0}\overline{\nu}^{-1}\|\nabla(u_{j}^{n+1}-u_{j}^{n})\|^{2} + C_0 \overline{\nu} \|\nabla\xi_{j,h}^{n+1}\|^{2} \nonumber\\
= & \frac{C^2\Delta t^2}{4C_0}\overline{\nu}^{-1}\left\|\frac{\nabla u_{j}^{n+1}- \nabla u_{j}^{n}}{\Delta t}\right\|^{2} + C_0 \overline{\nu} \|\nabla\xi_{j,h}^{n+1}\|^{2} \\
= &\frac{C^2\Delta t^2}{4C_0}\overline{\nu}^{-1} 
\int_{\Omega}\left(\frac{1}{\Delta t}\int_{t^{n}}^{t^{n+1}} \nabla u_{j,t}
\, dt\right)^{2} d\Omega+ C_0 \overline{\nu} \|\nabla\xi_{j,h}^{n+1}\|^{2} \nonumber\\
%\leq & \frac{C^2\Delta t^2}{4C_0}\overline{\nu}^{-1} 
%\int_{\Omega}\frac{1}{\Delta t}\int_{t^{n}}^{t^{n+1}} |\nabla u_{j,t}|^2
%\, dt\, d\Omega + C_0 \overline{\nu} \|\nabla\xi_{j,h}^{n+1}\|^{2} \nonumber\\
\leq & \frac{C^2 \Delta t}{4C_0} \overline{\nu}^{-1} \int_{t^{n}}^{t^{n+1}}\| \nabla u_{j,t} \|^{2}\, dt
 + C_0 \overline{\nu} \|\nabla\xi_{j,h}^{n+1}\|^{2}. \nonumber
\end{align}
For the pressure term in \eqref{eq:err1}, because $\xi_{j,h}^{n+1}\in V_{h}$, we have%
\begin{align}
(p_{j}^{n+1},\nabla\cdot\xi_{j,h}^{n+1})&=(p_{j}^{n+1}-q_{j,h}^{n+1},
\nabla\cdot\xi_{j,h}^{n+1})\nonumber\\
&\leq\sqrt{d}\,\|p_{j}^{n+1}-q_{j,h}^{n+1}\|\|\nabla\xi_{j,h}^{n+1}\|\\
&\leq
\frac{1}{4d\, C_0} \overline{\nu}^{-1}\|p_{j}^{n+1}-q_{j,h}^{n+1}\|^{2} + C_0\,\overline{\nu}\|\nabla\xi_{j,h}^{n+1}\|^{2}
 \text{ .}\nonumber
\end{align}
The other terms are bounded as
\begin{align}
\Big( \frac{\eta_{j}^{n+1}-\eta_{j}^{n}}{ \Delta t},\xi_{j,h}^{n+1} \Big) 
&\leq
C \Big\|\frac{\eta_{j}^{n+1}-\eta_{j}^{n}}{ \Delta t} \Big\| \|\nabla\xi_{j,h}%
^{n+1}\|\nonumber\\
&\leq 
\frac{C^2}{4C_0} \overline{\nu}^{-1}\left \|\frac{\eta_{j}^{n+1}-\eta_{j}^{n}}{ \Delta t} \right\|^{2}
+C_0 \overline{\nu}\|\nabla\xi_{j,h}^{n+1}\|^{2}\\
&\leq 
\frac{C^2}{4C_0} \overline{\nu}^{-1}\left \|\frac{1}{\Delta t}\int_{t^{n}}^{t^{n+1}} \eta_{j,t} \text{ }
dt \right \|^2+C_0 \overline{\nu}\|\nabla\xi_{j,h}^{n+1}\|^{2}\nonumber\\
&\leq
\frac{1}{4C_0}\frac{C^2}{\overline{\nu}\Delta t}\int_{t^{n}}^{t^{n+1}}\| \eta_{j,t}\|^{2}\text{ }
dt+C_0 \overline{\nu}\|\nabla\xi_{j,h}^{n+1}\|^{2}\nonumber
\end{align}
and
\begin{align}
\text{Intp}(u_{j}^{n+1};\xi_{j,h}^{n+1})
&=\left(\frac{u_{j}^{n+1}-u_{j}^{n}}{\Delta
t}-u_{j,t}(t^{n+1}),\xi_{j,h}^{n+1}\right)\nonumber\\
&\leq C\left\|\frac{u_{j}^{n+1}-u_{j}^{n}}{\Delta t}-u_{j,t}(t^{n+1})\right\| \|\nabla
\xi_{j,h}^{n+1}\|\nonumber\\
&\leq
\frac{C^2}{4C_0}\overline{\nu}^{-1}\left \|\frac{u_{j}^{n+1}-u_{j}^{n}}{\Delta t}-u_{j,t}(t^{n+1}) \right\|^{2}
+ C_0\overline{\nu}\|\nabla\xi_{j,h}^{n+1}\|^{2} \nonumber \\
&\leq
\frac{C^2\Delta t}{4C_0}\overline{\nu}^{-1} \int_{t^{n}}^{t^{n+1}}\|u_{j,tt}\|^{2}\, dt 
+ C_0\overline{\nu}\|\nabla\xi_{j,h}^{n+1}\|^{2}
.
\label{lastineq}
\end{align}

\noindent Combining \eqref{firstineq}-\eqref{lastineq}, we have
\begin{align}
&\frac{1}{\Delta t}\left(\frac{1}{2}||\xi_{j,h}^{n+1}||^{2}-\frac{1}{2}||\xi
_{j,h}^{n}||^{2}+\frac{1}{4}\Vert\xi_{j,h}^{n+1}-\xi_{j,h}^{n}\Vert^{2} \right)+ C_0 \overline{\nu} \Vert \nabla \xi_{j,h}^{n+1}\Vert^2\nonumber\\
&+ \overline{\nu} \left(1-15C_0-\frac{\vert \nu_j-\overline{\nu}\vert}{2 \overline{\nu}}\right)\left(\Vert\nabla \xi_{j,h}^{n+1}\Vert^{2}-\Vert\nabla \xi_{j,h}^{n}\Vert
^{2}\right)\nonumber\\
&+\overline{\nu} \left((1-\sigma)\left(1-15C_0-\frac{\vert \nu_j-\overline{\nu}\vert}{2 \overline{\nu}}\right)-\frac{C \Delta
t}{\overline{\nu} h}\Vert\nabla(u_{j,h}^{n}-\overline{u}_{h}^{n})\Vert^{2}\right) \Vert\nabla
\xi_{j,h}^{n}\Vert^{2}\nonumber\\
&+\overline{\nu}  \left(\sigma\left(1-15C_0-\frac{\vert \nu_j-\overline{\nu}\vert}{2 \overline{\nu}}\right)-\frac{\vert \nu_j-\overline{\nu}\vert}{2 \overline{\nu}}\right)\Vert \nabla \xi_{j,h}^n\Vert^2\nonumber\\
&\leq \frac{C}{\overline{\nu}^{3}}\Vert\xi_{j,h}^{n}\Vert^{2}
+C\Delta t\frac{\vert \nu_j-\overline{\nu}\vert^2}{\overline{\nu}} \int_{t^{n}}^{t^{n+1}}\| \nabla u_{j,t}\|^{2} \text{
}dt
+C\overline{\nu}\Vert\nabla\eta_{j}^{n+1}\Vert^{2}\label{eq:err2} \\
&\quad+ C\frac{\vert \nu_j-\overline{\nu}\vert^2}{\overline{\nu}}\Vert\nabla\eta_{j}^{n+1}\Vert^{2}
+C\overline{\nu}^{-1}\Vert\nabla
(u_{j,h}^n-\overline{u}_{h}^n)\Vert^{2}
\Vert\nabla\eta_{j}^{n+1}\Vert^{2} \nonumber\\
&\quad+ C\overline{\nu}^{-1}\Vert\nabla (u_{j,h}^n-\overline{u}_h^n)\Vert^{2}\Vert\nabla\eta_{j}^{n}\Vert^{2}
+C\overline{\nu}^{-1} \Delta t \Vert\nabla (u_{j,h}^n-\overline{u}_h^n)\Vert^{2} \int_{t^{n}}^{t^{n+1}%
}\Vert\nabla u_{j,t}\Vert^{2}\, dt \nonumber\\
&\quad+ C\overline{\nu}^{-1}\Vert\nabla\eta_{j}^{n}\Vert^{2}
+C\overline{\nu}^{-1}\Vert\nabla u_{j,h}^{n}\Vert^{2}\Vert\nabla\eta_{j}^{n+1}\Vert^{2}
+C\overline{\nu}^{-1}\Delta t \int_{t^{n}}^{t^{n+1}}\Vert\nabla u_{j,t}\Vert^{2}\, dt \nonumber\\
&\quad+ C\overline{\nu}^{-1}\Vert p_{j}^{n+1}-q_{j,h}^{n+1}\Vert^{2}
+C\overline{\nu}^{-1}\Delta t^{-1}\int_{t^{n}}^{t^{n+1}}\Vert\eta_{j,t}\Vert^{2}\, dt
+C\overline{\nu}^{-1} \Delta t \int_{t^{n}}^{t^{n+1}}\Vert u_{j,tt}\Vert^{2}dt. \nonumber
\end{align}
Note that the generic constant $C$ independent of $\Delta t$  is used on the RHS. It, however, depends on the geometry and mesh due to the use of inverse inequality in \eqref{inver}.

Similar to the stability analysis, we take $C_0= \frac{1}{15}(1-\frac{\sigma+1}{2\sigma}\sqrt{\mu}) =  \frac{1}{15} \frac{\epsilon} {\sqrt{\mu}+\epsilon}( 1-\frac{\sqrt{\mu}}{2}) $ with $\sigma=\frac{\sqrt{\mu}+\epsilon}{2-\sqrt{\mu}}$. Then, \eqref{eq:err2} becomes
\begin{align}
&\frac{1}{\Delta t}\left(\frac{1}{2}||\xi_{j,h}^{n+1}||^{2}-\frac{1}{2}||\xi
_{j,h}^{n}||^{2}+\frac{1}{4}\Vert\xi_{j,h}^{n+1}-\xi_{j,h}^{n}\Vert^{2}%
\right)+\frac{1}{15}\frac{\epsilon}{\sqrt{\mu}+\epsilon}(1-\frac{\sqrt{\mu}}{2}) \overline{\nu} \Vert \nabla \xi_{j,h}^{n+1}\Vert^2\nonumber\\
&+ \overline{\nu} \left(\frac{\sqrt{\mu}}{2}\frac{(2+\epsilon)}{\sqrt{\mu}+\epsilon}-\frac{\vert \nu_j-\overline{\nu}\vert}{2 \overline{\nu}}\right)\left(\Vert\nabla \xi_{j,h}^{n+1}\Vert^{2}-\Vert\nabla \xi_{j,h}^{n}\Vert
^{2}\right)\nonumber\\
&+\overline{\nu} \left(\frac{2-2\sqrt{\mu}-\epsilon}{2-\sqrt{\mu}}\left(\frac{\sqrt{\mu}}{2}\frac{2+\epsilon}{\sqrt{\mu}+\epsilon}-\frac{\vert \nu_j-\overline{\nu}\vert}{2 \overline{\nu}}\right)-\frac{C \Delta
t}{\overline{\nu} h}\Vert\nabla(u_{j,h}^{n}-\overline{u}_{h}^{n})\Vert^{2}\right) \Vert\nabla
\xi_{j,h}^{n}\Vert^{2}\nonumber\\
&+\overline{\nu}  \left(\frac{\sqrt{\mu}+\epsilon}{2-\sqrt{\mu}}\left(\frac{\sqrt{\mu}}{2}\frac{2+\epsilon}{\sqrt{\mu}+\epsilon}-\frac{\vert \nu_j-\overline{\nu}\vert}{2 \overline{\nu}}\right)-\frac{\vert \nu_j-\overline{\nu}\vert}{2 \overline{\nu}}\right)\Vert \nabla \xi_{j,h}^n\Vert^2\nonumber\\
&\leq \frac{C}{\overline{\nu}^{3}}\Vert\xi_{j,h}^{n}\Vert^{2}
+C\Delta t\frac{\vert \nu_j-\overline{\nu}\vert^2}{\overline{\nu}} \int_{t^{n}}^{t^{n+1}}\| \nabla u_{j,t}\|^{2} \text{
}dt
+C\overline{\nu}\Vert\nabla\eta_{j}^{n+1}\Vert^{2}\label{eq:err21} \\
&\quad+ C\frac{\vert \nu_j-\overline{\nu}\vert^2}{\overline{\nu}}\Vert\nabla\eta_{j}^{n+1}\Vert^{2}
+C\overline{\nu}^{-1}\Vert\nabla
(u_{j,h}^n-\overline{u}_{h}^n)\Vert^{2}
\Vert\nabla\eta_{j}^{n+1}\Vert^{2} \nonumber\\
&\quad+ C\overline{\nu}^{-1}\Vert\nabla (u_{j,h}^n-\overline{u}_h^n)\Vert^{2}\Vert\nabla\eta_{j}^{n}\Vert^{2}
+C\overline{\nu}^{-1} \Delta t \Vert\nabla (u_{j,h}^n-\overline{u}_h^n)\Vert^{2} \int_{t^{n}}^{t^{n+1}%
}\Vert\nabla u_{j,t}\Vert^{2}\, dt \nonumber\\
&\quad+ C\overline{\nu}^{-1}\Vert\nabla\eta_{j}^{n}\Vert^{2}
+C\overline{\nu}^{-1}\Vert\nabla u_{j,h}^{n}\Vert^{2}\Vert\nabla\eta_{j}^{n+1}\Vert^{2}
+C\overline{\nu}^{-1}\Delta t \int_{t^{n}}^{t^{n+1}}\Vert\nabla u_{j,t}\Vert^{2}\, dt \nonumber\\
&\quad+ C\overline{\nu}^{-1}\Vert p_{j}^{n+1}-q_{j,h}^{n+1}\Vert^{2}
+C\overline{\nu}^{-1}\Delta t^{-1}\int_{t^{n}}^{t^{n+1}}\Vert\eta_{j,t}\Vert^{2}\, dt
+C\overline{\nu}^{-1} \Delta t \int_{t^{n}}^{t^{n+1}}\Vert u_{j,tt}\Vert^{2}dt\text{ .} \nonumber
\end{align}
By the convergence condition \eqref{conv2}, we have 
\begin{align*}
\frac{\sqrt{\mu}}{2}\frac{(2+\epsilon)}{\sqrt{\mu}+\epsilon}-\frac{\vert \nu_j-\overline{\nu}\vert}{2 \overline{\nu}}&\geq \left(\frac{\sqrt{\mu}}{2}\frac{(2+\epsilon)}{\sqrt{\mu}+\epsilon}-\frac{\sqrt{\mu}}{2}\right)\geq \frac{\sqrt{\mu}}{2}\frac{2-\sqrt{\mu}}{\sqrt{\mu}+\epsilon} > 0,
\end{align*}
and 
\begin{align*}
&\frac{\sqrt{\mu}+\epsilon}{2-\sqrt{\mu}}\left(\frac{\sqrt{\mu}}{2}\frac{2+\epsilon}{\sqrt{\mu}+\epsilon}-\frac{\vert \nu_j-\overline{\nu}\vert}{2 \overline{\nu}}\right)-\frac{\vert \nu_j-\overline{\nu}\vert}{2 \overline{\nu}}\\
&\qquad\qquad\geq  \frac{\sqrt{\mu}+\epsilon}{2-\sqrt{\mu}}\left(\frac{\sqrt{\mu}}{2}\frac{2+\epsilon}{\sqrt{\mu}+\epsilon}-\frac{\sqrt{\mu}}{2}\right)-\frac{\sqrt{\mu}}{2}\\
&\qquad\qquad\geq  \frac{\sqrt{\mu}+\epsilon}{2-\sqrt{\mu}}\left(\frac{\sqrt{\mu}}{2}\frac{2-\sqrt{\mu}}{\sqrt{\mu}+\epsilon}\right)-\frac{\sqrt{\mu}}{2} \geq \frac{\sqrt{\mu}}{2}-\frac{\sqrt{\mu}}{2}=0
\end{align*}
and by the convergence conditions \eqref{conv1} and \eqref{conv2}, we have
\begin{align*}
&\frac{2-2\sqrt{\mu}-\epsilon}{2-\sqrt{\mu}}\left(\frac{\sqrt{\mu}}{2}\frac{2+\epsilon}{\sqrt{\mu}+\epsilon}-\frac{\vert \nu_j-\overline{\nu}\vert}{2 \overline{\nu}}\right)-\frac{C \Delta
t}{\overline{\nu} h}\Vert\nabla(u_{j,h}^{n}-
\overline{u}_{h}^{n})\Vert^{2}\\
&\qquad\qquad\geq\frac{2-2\sqrt{\mu}-\epsilon}{2-\sqrt{\mu}}\left(\frac{\sqrt{\mu}}{2}\frac{2-\sqrt{\mu}}{\sqrt{\mu}+\epsilon}\right)-\frac{(2-2\sqrt{\mu}-\epsilon)\sqrt{\mu}}{2(\sqrt{\mu}+\epsilon)}\\
&\qquad\qquad\geq \frac{(2-2\sqrt{\mu}-\epsilon)\sqrt{\mu}}{2(\sqrt{\mu}+\epsilon)}-\frac{(2-2\sqrt{\mu}-\epsilon)\sqrt{\mu}}{2(\sqrt{\mu}+\epsilon)}=0.
\end{align*}
Summing (\ref{eq:err2}) from $n=1$ to $N-1$ and multiplying both sides by $\Delta t$ gives
\begin{align*}
&\frac{1}{2}\|\xi_{j,h}^{N}\|^{2}+\frac{1}{4}\sum_{n=0}^{N-1}\Vert\xi_{j,h}^{n+1}
-\xi_{j,h}^{n}\Vert^{2}+\frac{1}{15}\frac{\epsilon}{\sqrt{\mu}+\epsilon}(1-\frac{\sqrt{\mu}}{2})\overline{\nu}\Delta t\sum_{n=0}^{N-1}\|\nabla\xi_{j,h}^{n+1}\|^{2}
\\
&\qquad+\overline{\nu} \Delta t \left(\frac{\sqrt{\mu}}{2}\frac{(2+\epsilon)}{\sqrt{\mu}+\epsilon}-\frac{\vert \nu_j-\overline{\nu}\vert}{2 \overline{\nu}}\right)\|\nabla\xi_{j,h}^{N}\|^{2}\\
&\leq\frac{1}{2}\|\xi_{j,h}^{0}\|^{2}+\overline{\nu} \Delta t \left(\frac{\sqrt{\mu}}{2}\frac{(2+\epsilon)}{\sqrt{\mu}+\epsilon}-\frac{\vert \nu_j-\overline{\nu}\vert}{2 \overline{\nu}}\right)\|\nabla\xi_{j,h}^{0}\|^{2}
+\frac{C\Delta t}{\overline{\nu}^{3}}\sum_{n=0}^{N-1}\Vert\xi_{j,h}^{n}\Vert^{2}\\
&\quad+\Delta t\sum_{n=0}^{N-1}
\bigg\{
C\Delta t\frac{\vert \nu_j-\overline{\nu}\vert^2}{\overline{\nu}} \int_{t^{n}}^{t^{n+1}}\| \nabla u_{j,t}\|^{2} \,dt
+C\overline{\nu}\Vert\nabla\eta_{j}^{n+1}\Vert^{2}\\
&\quad+C\frac{\vert \nu_j-\overline{\nu}\vert^2}{\overline{\nu}}\Vert\nabla\eta_{j}^{n+1}\Vert^{2}
+C\overline{\nu}^{-1}\Vert\nabla
(u_{j,h}^n-\overline{u}_{h}^n)\Vert^{2}
\Vert\nabla\eta_{j}^{n+1}\Vert^{2}\\
&\quad+C\overline{\nu}^{-1}\Vert\nabla (u_{j,h}^n-\overline{u}_h^n)\Vert^{2}\Vert\nabla\eta_{j}^{n}\Vert^{2}
+C\overline{\nu}^{-1}\Delta t \Vert\nabla (u_{j,h}^n-\overline{u}_h^n)\Vert^{2}
\int_{t^{n}}^{t^{n+1}}\Vert\nabla u_{j,t}\Vert^{2}\, dt \\ 
&\quad+C\overline{\nu}^{-1}\Vert\nabla\eta_{j}^{n}\Vert^{2}
+C\overline{\nu}^{-1}\Delta t \int_{t^{n}}^{t^{n+1}}\Vert\nabla u_{j,t}\Vert^{2}\,dt
+C\overline{\nu}^{-1}\Vert\nabla\eta_{j}^{n+1}\Vert^{2}\Vert\nabla u_{j,h}^{n+1}\Vert \\
&\,+C\overline{\nu}^{-1}\Vert p_{j}^{n+1}-q_{j,h}^{n+1}\Vert^{2}
+C \overline{\nu}^{-1} \Delta t^{-1}\int_{t^{n}}^{t^{n+1}}\Vert\eta_{j,t}\Vert^{2}\, dt
+C\overline{\nu}^{-1} \Delta t \int_{t^{n}}^{t^{n+1}}\Vert u_{j,tt}\Vert^{2}\, dt 
\bigg\} \text{ .}
\end{align*}
\label{eq:err3}
Using the interpolation inequality \eqref{interp2} and  the result \eqref{eq:sta} from the stability analysis, i.e., $\Delta t \sum_{n=0}^{N-1}\Vert \nabla u_{j,h}^{n+1} \Vert^2 \leq C$, we have
\begin{align}
C\overline{\nu}^{-1}\Delta t\sum_{n=0}^{N-1}\Vert\nabla\eta_{j}^{n+1}\Vert^{2}&\Vert\nabla u_{j,h}^{n+1}\Vert 
\leq C\overline{\nu}^{-1}h^{2k}\Delta t \sum_{n=0}^{N-1}\Vert u_j^{n+1}\Vert^2_{k+1}\Vert^{2}\Vert\nabla u_{j,h}^{n+1}\Vert\label{ineq:err4}\\
\leq & C\nu^{-1}h^{2k}\left( \Delta t \sum_{n=0}^{N-1}\Vert u_{j}^{n+1}\Vert_{k+1}^4+\Delta t \sum_{n=0}^{N-1}\Vert\nabla u_{j,h}^{n+1}\Vert^2 \right)\\
\leq & C\nu^{-1}h^{2k}\vertiii{ u_j}^4_{4, k+1}+C\nu^{-1}h^{2k}.\nonumber
\end{align}
Applying the interpolation inequalities \eqref{Interp1}, \eqref{interp2}, and \eqref{interp3} gives
\begin{align}
&\frac{1}{2}\|\xi_{j,h}^{N}\|^{2}+ \overline{\nu}\Delta t \left(\frac{\sqrt{\mu}}{2}\frac{(2+\epsilon)}{\sqrt{\mu}+\epsilon}-\frac{\vert \nu_j-\overline{\nu}\vert}{2 \overline{\nu}}\right)\|\nabla\xi_{j,h}^{N}\|^{2}
+\sum_{n=0}^{N-1}\frac{1}{4}\Vert\xi_{j,h}^{n+1}-\xi_{j,h}^{n}\Vert^{2}
\nonumber\\
&\qquad +\frac{1}{15}\frac{\epsilon}{\sqrt{\mu}+\epsilon}(1-\frac{\sqrt{\mu}}{2})\overline{\nu}\Delta t\sum_{n=0}^{N-1}\|\nabla\xi_{j,h}^{n+1}\|^{2}\nonumber\\
&\leq\frac{1}{2}\|\xi_{j,h}^{0}\|^{2}+\overline{\nu} \Delta t \left(\frac{\sqrt{\mu}}{2}\frac{(2+\epsilon)}{\sqrt{\mu}+\epsilon}-\frac{\vert \nu_j-\overline{\nu}\vert}{2 \overline{\nu}}\right)\|\nabla\xi_{j,h}^{0}\|^{2}
+\frac{C\Delta t}{\overline{\nu}^{3}}\sum_{n=0}^{N-1}\Vert\xi_{j,h}^{n}\Vert^{2}\label{ineq:errlast}
 \\
&\quad+ C\Delta t^2\frac{\vert \nu_j -\overline{\nu}\vert^2}{\overline{\nu}}\vertiii{ \nabla u_{j,t}}^2_{2,0}
+C\overline{\nu}h^{2k}\vertiii{ u_j }^2_{2,k+1}+C\frac{\vert \nu_j - \overline{\nu}\vert^2}{\overline{\nu}}h^{2k}\vertiii{ u_j}^2_{2,k+1}
\nonumber \\
&\quad+Ch^{2k+1}\Delta t^{-1}\vertiii{ u_j}^2_{2,k+1}+C h \Delta t \vertiii{ \nabla u_{j,t}}^2_{2,0}+ C\overline{\nu}^{-1}h^{2k}\vertiii{ u_j}^2_{2,k+1}
\nonumber\\
&\quad  +C\overline{\nu}^{-1}\Delta t^2\vertiii{ \nabla u_{j,t}}^2_{2,0} + C\overline{\nu}^{-1}h^{2k}\vertiii{ u_j}^4_{4, k+1}+C\overline{\nu}^{-1}h^{2k}
\nonumber\\
&\quad+C\overline{\nu}^{-1} h^{2s+2}\Vert|p_{j}|\Vert_{2,s+1}^{2}+C\overline{\nu}^{-1} h^{2k+2}\Vert
|u_{j,t}|\Vert_{2,k+1}^{2}
+C\overline{\nu}^{-1}\Delta t^{2}\Vert|u_{j,tt}|\Vert_{2,0}^{2}.\nonumber
\end{align}
The next step is the application of the discrete Gronwall
inequality (see \cite[p. 176]{GR79}):
\begin{align}
&\frac{1}{2}\|\xi_{j,h}^{N}\|^{2}+ \overline{\nu}\Delta t\left(\frac{\sqrt{\mu}}{2}\frac{(2+\epsilon)}{\sqrt{\mu}+\epsilon}-\frac{\vert \nu_j-\overline{\nu}\vert}{2 \overline{\nu}}\right) \|\nabla\xi_{j,h}^{N}\|^{2}
+\sum_{n=0}^{N-1}\frac{1}{4}\Vert\xi_{j,h}^{n+1}-\xi_{j,h}^{n}\Vert^{2}
\nonumber\\
&\qquad+\frac{1}{15}\frac{\epsilon}{\sqrt{\mu}+\epsilon}(1-\frac{\sqrt{\mu}}{2})\overline{\nu}\Delta t\sum_{n=0}^{N-1}\|\nabla\xi_{j,h}^{n+1}\|^{2}\label{ineq:errlast1}\\
&\leq e^{\frac{CT}{\nu^{3}}}
\bigg\{\frac{1}{2}||\xi_{j,h}^{0}||^{2}
+\overline{\nu}\Delta t \left(\frac{\sqrt{\mu}}{2}\frac{(2+\epsilon)}{\sqrt{\mu}+\epsilon}-\frac{\vert \nu_j-\overline{\nu}\vert}{2 \overline{\nu}}\right)\|\nabla\xi_{j,h}^{0}\|^{2}
\nonumber\\
&\qquad+C\Delta t^2\frac{\vert \nu_j -\overline{\nu}\vert^2}{\overline{\nu}}\vertiii{ \nabla u_{j,t}}^2_{2,0}+C\overline{\nu}h^{2k}\vertiii{ u_j }^2_{2,k+1}
+C\frac{\vert \nu_j - \overline{\nu}\vert^2}{\overline{\nu}}h^{2k}\vertiii{ u_j}^2_{2,k+1}
\nonumber\\
&\qquad+Ch^{2k+1}\Delta t^{-1}\vertiii{ u_j}^2_{2,k+1}+C h \Delta t \vertiii{ \nabla u_{j,t}}^2_{2,0} 
+ C\overline{\nu}^{-1}h^{2k}\vertiii{ u_j}^2_{2,k+1} \nonumber\\
& \qquad+C\overline{\nu}^{-1}\Delta t^2\vertiii{ \nabla u_{j,t}}^2_{2,0}+ C\overline{\nu}^{-1}h^{2k}\vertiii{ u_j}^4_{4, k+1}+C\overline{\nu}^{-1}h^{2k}
 \nonumber \\
&\qquad+C\overline{\nu}^{-1} h^{2s+2}\Vert|p_{j}|\Vert_{2,s+1}^{2}+C\overline{\nu}^{-1} h^{2k+2}\Vert
|u_{j,t}|\Vert_{2,k+1}^{2}
+C\overline{\nu}^{-1}\Delta t^{2}\Vert|u_{j,tt}|\Vert_{2,0}^{2}
\bigg\}.\nonumber
\end{align}

Recall that $e_{j}^{n}=\eta_{j}^{n}+\xi_{j,h}^{n}$. Using the triangle
inequality on the error equation to split the error terms into terms of
$\eta_{j}^{n}$ and $\xi_{j,h}^{n}$ gives
\begin{equation*}
\begin{aligned}
\frac{1}{2}&\Vert e_{j}^{N}\Vert^{2}
+\frac{1}{15}\frac{\epsilon}{\sqrt{\mu}+\epsilon}(1-\frac{\sqrt{\mu}}{2})\overline{\nu} \Delta t\sum_{n=0}^{N-1}\Vert\nabla e_{j}^{n+1}\Vert^{2}
\\
&\leq\frac{1}{2}\Vert\xi_{j,h}^{N}\Vert^{2}
+\frac{1}{15}\frac{\epsilon}{\sqrt{\mu}+\epsilon}(1-\frac{\sqrt{\mu}}{2})\overline{\nu} \Delta t\sum_{n=0}^{N-1}\Vert\nabla \xi_{j,h}^{n+1}\Vert^{2}
 \\
&+\frac{1}{2}\Vert\eta_{j}^{N}\Vert^{2}
+\frac{1}{15}\frac{\epsilon}{\sqrt{\mu}+\epsilon}(1-\frac{\sqrt{\mu}}{2})\overline{\nu} \Delta t\sum_{n=0}^{N-1}\Vert\nabla \eta_{j}^{n+1}\Vert
^{2}
\end{aligned}
\end{equation*}
and 
\begin{equation*}
\begin{aligned}
\frac{1}{2}\Vert\xi_{j,h}^{0}&\Vert^{2}
+\left(\frac{\sqrt{\mu}}{2}\frac{(2+\epsilon)}{\sqrt{\mu}+\epsilon}-\frac{\vert \nu_j-\overline{\nu}\vert}{2 \overline{\nu}}\right)\overline{\nu}\Delta t \Vert\nabla \xi_{j,h}
^{0}\Vert^{2}\\
&\leq \frac{1}{2}\Vert e_{j}^{0}\Vert^{2}
+\left(\frac{\sqrt{\mu}}{2}\frac{(2+\epsilon)}{\sqrt{\mu}+\epsilon}-\frac{\vert \nu_j-\overline{\nu}\vert}{2 \overline{\nu}}\right)\overline{\nu}\Delta t \Vert\nabla e_{j}^{0}\Vert^{2}\\
&\quad+\frac{1}{2}\Vert\eta_{j}^{0}\Vert^{2}
+\left(\frac{\sqrt{\mu}}{2}\frac{(2+\epsilon)}{\sqrt{\mu}+\epsilon}-\frac{\vert \nu_j-\overline{\nu}\vert}{2 \overline{\nu}}\right)\overline{\nu}\Delta t \Vert\nabla \eta_{j}^{0}\Vert^{2}\, .
\end{aligned}
\end{equation*}

\noindent Applying inequality (\ref{ineq:errlast1}), using the previous bounds
for $\eta_{j}^{n}$ terms, and absorbing constants into a new constant $C$, completes the proof of Theorem  \ref{th:errBEFE-Ensemble}.
}
\end{document}